
\documentclass[11pt]{article}
\usepackage{graphicx,psfrag,amsmath,amsfonts,verbatim,mathtools}
\usepackage{times}
\usepackage{fancyhdr} 
\usepackage{enumerate}
\usepackage{enumitem} 
\usepackage[round]{natbib}
\numberwithin{equation}{section}

\usepackage{mathrsfs}
\usepackage{soul}

\usepackage{listings}

\usepackage{setspace}
\linespread{1.05}


\usepackage[perpage]{footmisc}
\usepackage[colorlinks,linkcolor=blue,urlcolor=blue]{hyperref}
\usepackage{placeins}
\usepackage{float}

\usepackage{graphicx}
\usepackage{subfigure}
\usepackage{epstopdf}
\usepackage{float}

\usepackage{geometry}
\geometry{top=1in, bottom=1in, left=1in, right=1in}

\pagestyle{fancy}

\usepackage{algorithm}
\usepackage{algorithmic}

\usepackage{endnotes}

\usepackage{multirow}
\usepackage{booktabs}
\usepackage{makecell}
\usepackage{footnotebackref}
\usepackage{tablefootnote}
\usepackage{diagbox}

\usepackage{breakcites}

\setcounter{tocdepth}{2}

\usepackage{amsmath,amsfonts,bm}
\usepackage{amssymb}
\usepackage[amsmath,thmmarks,framed]{ntheorem}
\theoremstyle{plain}
\usepackage{framed}
\usepackage{color}
\definecolor{gray}{rgb}{0.6,0.6,0.6}

\newtheorem{defn}{Definition}[section]
\newtheorem{thm}{Theorem}[section]
\newtheorem{prop}{Proposition}[section]

\newtheorem{lm}{Lemma}[section]
\newtheorem{assume}{Assumption}[section]
\newtheorem{remark}{Remark}[section]
\newtheorem{cor}{Corollary}[section]

\pagestyle{fancy}
\lhead{}
\rhead{}
\cfoot{\thepage}

\usepackage{xcolor}
\usepackage{framed}
\usepackage{lipsum}
\colorlet{shadecolor}{blue!20}

\usepackage{mdframed}

\usepackage{macros}

\begin{document}

\title{On the Convergence Rate of Stochastic Mirror Descent for Nonsmooth Nonconvex Optimization}
\author{
	Siqi Zhang\thanks{Department of Industrial and Enterprise Systems Engineering (ISE), University of Illinois at Urbana-Champaign (UIUC), Urbana, IL 61801, USA. Emails:
	\href{mailto:siqiz4@illinois.edu}{siqiz4@illinois.edu}, 
	\href{mailto:niaohe@illinois.edu}{niaohe@illinois.edu}. This work is supported by NSF CCF-1755829.
	}
	\and
	Niao He\footnotemark[1]
}
\date{}
\maketitle

\begin{abstract}
	In this paper, we investigate the non-asymptotic stationary convergence behavior of Stochastic Mirror Descent (SMD) for nonconvex optimization. We focus on a general class of nonconvex nonsmooth stochastic optimization problems, in which the objective can be decomposed  into a relatively weakly convex function (possibly non-Lipschitz) and a simple non-smooth convex regularizer.  We prove that SMD, without the use of mini-batch,  is guaranteed to converge to a stationary point in a convergence rate of $ \mathcal{O}(1/\sqrt{t}) $. The efficiency estimate matches with existing results for stochastic subgradient method, but is evaluated under a stronger stationarity measure. Our convergence analysis applies to both the original SMD and its proximal version, as well as the deterministic variants, for solving relatively weakly convex problems. 

\end{abstract}

\section{Introduction}

In this paper, we consider the composite nonsmooth nonconvex stochastic optimization problems with the following general form
\begin{equation}\label{eq:obj}
\min_{x\in{X}}\;T(x):= f(x)+r(x)=\EE_{\xi}\left[F(x;\xi)\right]+r(x)
\end{equation} 
where $ {X}\subseteq\mathcal{X} $ is a nonempty closed convex subset of a finite-dimensional Euclidean space $ \mathcal{X} $ equipped with a norm $ ||\cdot|| $, $ f(x):{X}\rightarrow\RR $ is a nonsmooth nonconvex function, $ r(x):{X}\rightarrow\RR $ is a simple nonsmooth convex regularizer. 

Throughout, we assume that $f(x)$ is \emph{$ \rho $-relatively weakly convex}, \ie, the function $f(x)+\rho\omega(x)$ is convex for some $\rho>0$ and some function $\omega(x):{X}\to\RR$ that is continuously differentiable and $1$-strongly convex with respect to the norm $\|\cdot\|$ defined on $\Xcal$. In the case when $\omega(x)=\frac{1}{2}\|x\|^2$ and $\|\cdot\|$ is an inner product induced norm, $f(x)$ is also called \emph{$\rho$-weakly convex}. Weak convexity is a special yet very common case of nonconvex functions, which contains all convex functions and Lipschitz smooth functions. The composite form of the optimization problem covers a wide spectrum of regularized problems in machine learning, including the nonlinear least square,  sparse logistic regression \citep{jun2009largescale,xinyue2018nonconvex}, sparse recovery~\citep{laming2014convergence}, and robust phase retrieval \citep{davis2017nonsmooth}. 

When the function $f(x)$ is convex, the proximal variant of Stochastic Mirror Descent (SMD) is one of the most widely used algorithms for solving the above composite problem; see, e.g.,~\cite{duchi2010composite},~\cite{he2015saddle}, and~\cite{beck17firstorder}. SMD performs the recurrence at each iteration: 
\beq{alg:SMD}
x_{t+1}=\argmin_{x\in {X}}\;\{\la F'(x_t,\xi_t),x\ra+r(x) +\frac{1}{\alpha_t}D_\psi(x,x_t)\}
\eeq
where $\alpha_t>0$ is the stepsize, $F'(x_t,\xi_t)$ is an unbiased estimator of the subgradient of $f(x)$ at $x_t$, the term $D_\psi(x,x_t):=\psi(x)-\psi(x_t)-\la\nabla\psi(x_t),x-x_t\ra$ stands for some Bregman divergence generated by a $1$-strongly convex and continuously differentiable function $\psi(x)$ defined on $X$. Note that when the Bregman divergence is set to be the simple Euclidean distance, \ie, $D_\psi(x,x')=\frac{1}{2}\|x-x'\|_2^2$ with $\psi(x)=\frac{1}{2}\|x\|_2^2$, SMD reduces to proximal stochastic subgradient method (SGD). When there is no regularizer, \ie, $r(x)=0$, this reduces to the original SMD~\citep{blair1985problem}. The non-asymptotic convergences of SMD algorithm and its variants have been extensively studied in the convex regime; see e.g., \cite{Nem09} for analysis of the original SMD, and \cite{duchi2010composite} for the proximal SMD.  It is well-known that SMD achieves an optimal $\Ocal(1/\sqrt{t})$ convergence rate for solving general composite nonsmooth convex problems with unimprovable constant factors.  However, the non-asymptotic convergence behavior of SMD is far from fully understood when moving to the nonconvex regime.

\subsection{Related Works}
There have been several recent works discussing the convergences of SGD or SMD for nonconvex problems. We mainly focus on the purely stochastic case, where $f(x)$ can only be accessed through stochastic oracles. The special case where $f(x)$ consists of a finite sum of components is beyond the scope of this work. 

The seminal work by \cite{ghadimi2013stochastic} provides the first non-asymptotic convergence analysis of SGD for unconstrained smooth nonconvex objectives, \ie, problem \eq{eq:obj} with ${X}=\RR^n, r(x)=0$ and smooth $f(x)$. They show that a modified SGD, called \emph{Randomized Stochastic Gradient (RSG)}, requires $\Ocal(1/\epsilon^2)$ number of iterations to generate an $\eps$-stationary point such that $\EE[\|\nabla f(x)\|_2^2]\leq\eps$. Later, \cite{ghadimi2016mini} addressed the general constrained composite problem \eq{eq:obj} with smooth $f(x)$ and proposed a modified mini-batch SMD method, called \emph{Randomized Stochastic Projected Gradient (RSPG)}, that requires using a mini-batch of size $\Ocal(1/\epsilon)$ samples to estimate the gradient at each iteration. They showed that the algorithm achieved  the same $ \mathcal{O}(1/\epsilon^2) $  overall sample complexity to achieve an $\epsilon$-stationary point, in terms of the generalized projected gradient, \ie, $\EE[\|g_{X}(x)\|_2^2]\leq\eps$ \footnote{Here the generalized project gradient is defined as $g_X(x_t):=\frac{1}{\alpha_t}(x_t-x_{t+1})$, where $x_{t+1}$ is defined in \eq{alg:SMD}.}. It is worth mentioning that although the RSPG algorithm utilizes the mirror descent framework, the analysis in \cite{ghadimi2016mini} only applies to the Euclidean setting and smooth objectives. 

To overcome the mini-batch requirement for solving constrained nonconvex problems, \cite{davis2017proximally} proposed the \emph{Proximally Guided Stochastic subGradient (PGSG)} method, by combining proximal point algorithm  and SGD in a nested framework - iteratively solving subproblems arising from proximal point algorithm through SGD routines. The algorithm solves problem \eq{eq:obj} with $r(x)=0$ and $\rho$-weakly convex $f(x)$,  and  attains the $ \widetilde{\mathcal{O}}(1/\eps^2) $ \footnote{ $ \widetilde{\mathcal{O}}(\cdot) $ means the complexity neglects its logarithmic terms in its expression.} sample complexity to get an $\eps$-stationary point, measured by the squared distance from zero to the Fr\'echet subdifferential set, \ie, $\EE\left[\text{dist}^2\left(0,\partial_F(f+\delta_{X})(x)\right)\right]\leq \eps$.  More recently, \cite{davis2018stochastic} considered the same weakly convex setting and showed that even the basic SGD and its proximal variant converge and exhibit an $ \mathcal{O}(1/\eps^2) $ sample complexity. 

\begin{table}[htbp]
	\centering 
	\footnotesize
	\renewcommand\arraystretch{1.4}
	\caption{Summary of algorithms}
	\label{tab:comparison}
	\begin{tabular}{l*{3}{|m{2.5cm}<{\centering}}*{1}{|m{3cm}<{\centering}}*{1}{|m{1.8cm}<{\centering}}}
		\toprule[1.5pt]
		
		{Method}
		& \emph{RSG} \hspace{1cm} \citep{ghadimi2013stochastic}
		& \emph{RSPG}\hspace{1cm} \citep{ghadimi2016mini}
		& \emph{PGSG}\hspace{1cm} \citep{davis2017proximally}
		& \emph{PSG} \citep{davis2018stochastic}
		& \emph{SMD}\hspace{2cm} (this paper)
		\\
		\midrule[1.5pt]
		
		{Convexity}
		& NC 
		& NC + C
		& WC
		& WC + C
		& RWC + C
		\\
		\hline
		
		{Smoothness}
		& Lip-smooth
		& Lip-smooth
		& Lip-continuous
		& Lip-continuous
		& (relative) Lip-continuous
		\\
		\hline
		
		{Constraint} 
		& $ \mathbb{R}^n $
		& closed convex
		& closed convex
		& $ \mathbb{R}^n $
		& closed convex
		\\
		\hline
		
		{Stationary} 
		& $ \mathbb{E}[||\nabla f(x)||_2^2] $ 
		& $ \mathbb{E}[||g_{X}(x)||_2^2] $
		& $ \mathbb{E}[||\mathcal{G}_{1/(2\rho)}(x)||^2] $
		& $ \mathbb{E}[||\mathcal{G}_{1/(2\rho)}(x)||^2] $
		& $ \mathbb{E}[\Delta_{1/(2\rho)}(x)] $
		\\
		\hline
		
		
		{Complexity}
		& $ \mathcal{O}(1/\epsilon^2) $
		& $ \mathcal{O}(1/\epsilon^2) $
		& $ \widetilde{\mathcal{O}}(1/\epsilon^2) $
		& $ \mathcal{O}(1/\epsilon^2) $
		& $ \mathcal{O}(1/\epsilon^2) $
		\\
		\hline
		
		{Batch size} 
		& 1
		& $ \mathcal{O}(1/\epsilon) $
		& 1 (per inner iter)
		& 1
		& 1
		\\
		\hline
		
		{Setting} 
		& Euclidean
		& Euclidean
		& Euclidean
		& Euclidean
		& Non-Euclidean
		\\
		\hline
		
		\multicolumn{6}{c}{C = Convex, NC = Nonconvex, WC = Weakly Convex, RWC = Relatively Weakly Convex} \\
		\multicolumn{6}{c}{Lip-smooth = Lipschitz Smooth, Lip-continuous = Lipschitz Continuous} \\
		\multicolumn{6}{c}{ $g_{X}(x_t)=\alpha_t^{-1}\|x_t-x_{t+1}\|$; $ \mathcal{G}_{1/(2\rho)}(x)$ and $\Delta_{1/(2\rho)}(x)$ are defined in \eq{eq:measure} and \eq{eq:measure_2}}\\
		\bottomrule[1.5pt]
	\end{tabular}
\end{table}

However, none of these works have considered or addressed the convergence behavior of SMD in the non-Euclidean setting. We point out that a recent work by \cite{zhou2017stochastic} investigated the asymptotic convergence of SMD, but is only limited to a very special class of nonconvex problems that ensures global convergence. This paper aims to close this fundamental theoretical gap and establish the non-asymptotic stationary convergence analysis of Stochastic Mirror Descent for nonconvex problems.  A detailed comparison of this work and previous ones is summarized in Table~\ref{tab:comparison}.

\subsection{Contribution}
In this paper, we establish the non-asymptotic stationary convergence rate analysis of SMD for constrained stochastic composite optimization problems in the general form of \eq{eq:obj}, where the objective is $\rho$-relatively weakly convex.  We consider SMD with  distance generating function setting to be exactly $\psi(x)=\omega(x)$. Our results apply to the basic SMD and its proximal version as well as the deterministic variants. More specifically, the main contributions can be summarized as follows. 

Firstly, inspired by \cite{davis2018stochastic}, we construct a new measure of stationary convergence called \emph{Bregman gradient mapping} based on the \emph{Bregman proximal operator}: 
\beq{eq:measure}
\Gcal_\lambda(x):=\frac{1}{\lambda}(x-\prox_{\lambda T}(x)) 
\eeq 
where the Bregman proximal operator $\prox_{\lambda T}(x):=\argmin_{y\in{X}}\{T(y)+\frac{1}{\lambda}D_\omega(y,x)\}$. When $T(x)$ is $\rho$-relatively weakly convex, the stationary measure is well-defined as long as $\lambda<\rho^{-1}$. Note that this is very distinct from the notion of generalized projection gradient used in \cite{ghadimi2016mini}. We also define another measure induced by Bregman divergence, called  \emph{Bregman stationarity}, 
\beq{eq:measure_2}
\Delta_\lambda(x):=\frac{1}{\lambda^2}\cdot\left(D_\omega(x,\prox_{\lambda T}(x))+D_\omega(\prox_{\lambda T}(x),x)\right).
\eeq
This quantity provides a stronger convergence criterion since $\|\Gcal_\lambda(x)\|^2\leq \Delta_\lambda(x)$. When the distance generating function $\omega(x)=\frac{1}{2}\|x\|_2^2$, both stationary measures reduce to the one used in \cite{davis2017proximally} and \cite{davis2018stochastic} based on the gradient of the Moreau envelope of the objective. Later, we provide detailed analysis of these stationary measures and its relations to the gradient of  Bregman Moreau envelope and traditional stationary measure, \ie, $\text{dist}(0,\partial (T+\delta_{X})(x)$ in this problem.

As a main result, we show that SMD converges to a $\eps$-stationary point such that $\EE[\Delta_{1/(2\rho)}(x)]\leq\epsilon$ within $\mathcal{O}(1/\epsilon^2)$ iterations. The rate matches with that of stochastic subgradient method recently established in~\cite{davis2018stochastic} and implies that using mini-batch is not necessary for SMD to converge for relatively weakly convex problems. 
This appears to be the first non-asymptotic convergence result for SMD in the nonconvex, nonsmooth regime, to the best of our knowledge. We provide a unified and simplified convergence analysis that apply to both plain SMD and its proximal variant. In contrast, \cite{davis2018stochastic} requires different analysis for the projected and proximal versions of stochastic subgradient method.

Lastly, we extend these results to a much weaker condition by assuming only relative continuity of the objective function. We show that similar convergence result  can be obtained under this relaxed assumption. 



\subsection{Paper Organization}
The paper is organized as follows. In Section~\ref{sec:prelim}, we  introduce the concepts of relative weak convexity and Bregman stationarity measures. We also review some important properties of Bregman Moreau envelope and Bregman proximal operator. In Section~\ref{sec:main}, we present the SMD algorithm and its stationary convergence guarantee.  Finally, in Section~\ref{sec:extensions}, we further extend the results to relative Lipschitz continuous problems.

\section{Relatively Weak Convexity and Bregman Stationarity}\label{sec:prelim}
In this section, we  first introduce the concept of relatively weakly convex functions and discuss some important properties and calculus of this family of nonconvex functions.  

\subsection{Relatively Weakly Convex Functions}


Let $X$ be a closed convex set and $\mathcal{X}$ be its embedding Euclidean space associated with some norm $\|\cdot\|$. Let $\omega(x):X\to\RR$ be a reference function that is continuously differentiable and 1-strongly convex on ${X}$ with respect to the given norm $\|\cdot\|$, \ie, $\omega(x)-\omega(y)-\langle\nabla\omega(y),x-y\rangle\geq\frac{1}{2}\|x-y\|^2$ for any $ x,\ y\in X $. This induces the \emph{Bregman divergence}, denoted by $ D_{\omega}(x, y) $:
\begin{equation}
D_{\omega}(x, y) = \omega(x)-\omega(y)-\langle\nabla\omega(y),x-y\rangle. 
\end{equation}
It follows immediately that $D_{\omega}(x, y)\geq\frac{1}{2}\|x-y\|^2$. 

\begin{defn}
	\emph{(Relatively Weak Convexity)} 
	 A function $f(x):X\to\RR$ is said to be $\rho$-relatively weakly convex on $X$ with respect to the reference function $\omega(x)$ if $f(x)+\rho\omega(x)$ is convex on $X$. 
	We denote $f(x)$ as $(\rho,\omega(\cdot))$-RWC. 
\end{defn}

The above definition generalizes the traditional notion of weak convexity introduced in \cite{vial1983strong} and extensively studied in existing works~\citep{drusvyatskiy2017proximal,davis2017proximally,davis2018stochastic}.  In the  case when $\omega(x)=\frac{1}{2}\|x\|^2$ with some inner product induced norm $\|\cdot\|$,  such as the Euclidean norm, the function $f(x)$ is called $\rho$-weakly convex. Obviously, any $\rho$-weakly convex function is also $(\rho,\omega(\cdot))$-weakly convex,  for any reference $\omega(x)$ that is $1$-strongly convex with respect to the norm $\|\cdot\|$. However, the class of relatively weakly convex functions can be much broader. For example, the function $f(x)=-\sum_{i=1}^nx_i\log(x_i)$ is relatively weakly convex, but not weakly convex.

In what follows, we will provide some equivalent characterizations of relatively weakly convexity. 


\begin{prop}\label{prop:RWC}
Let $X\subseteq U$, where $U$ is a convex open set. 
The following statements are equivalent:
\begin{enumerate}
	\item[(i)] $f(x)$ is $(\rho,\omega(\cdot))$-RWC on $U$.
	\item[(ii)] For any fixed $ y\in U$,  $ f_\rho(x;y)\coloneqq f(x)+\rho D_\omega(x,y) $ is convex in $ x\in U $.
	\item[(iii)] For any fixed $ y\in U $, there exists $g\in\Xcal$, such that
	\begin{equation}\label{eq:wc}
	f(x)\geq f(y)+\langle g,x-y \rangle-\rho D_\omega(x,y), \forall x\in U.
	\end{equation}
\end{enumerate}
\end{prop}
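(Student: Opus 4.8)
The plan is to prove the three statements equivalent as a short cycle, exploiting that the genuinely content-bearing implications reduce to standard convex-analytic facts while the link (i)$\Leftrightarrow$(ii) is essentially free. First I would dispatch (i)$\Leftrightarrow$(ii) by unfolding the Bregman divergence: with $y$ held fixed,
\[
f_\rho(x;y)=f(x)+\rho D_\omega(x,y)=\bigl[f(x)+\rho\omega(x)\bigr]-\rho\omega(y)-\rho\langle\nabla\omega(y),x-y\rangle,
\]
where the last two terms are affine in $x$. Since adding (or subtracting) an affine function alters neither convexity nor its failure, $f_\rho(\cdot;y)$ is convex for one (equivalently every) fixed $y$ if and only if $f+\rho\omega$ is convex. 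This yields both directions simultaneously.

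For (ii)$\Rightarrow$(iii), fix $y\in U$ and use that $f_\rho(\cdot;y)$ is a finite-valued convex function on the \emph{open} convex set $U$; such a function is locally Lipschitz and therefore subdifferentiable at every point of $U$. Choosing a subgradient $g$ of $f_\rho(\cdot;y)$ at the point $x=y$, and noting $D_\omega(y,y)=0$ so that $f_\rho(y;y)=f(y)$, gives $f(x)+\rho D_\omega(x,y)=f_\rho(x;y)\geq f(y)+\langle g,x-y\rangle$ for all $x\in U$, which rearranges exactly into \eqref{eq:wc}.

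For (iii)$\Rightarrow$(i), I set $\phi(x):=f(x)+\rho\omega(x)$ and push the term $-\rho D_\omega(x,y)$ through the definition of the Bregman divergence. Adding $\rho\omega(x)$ to both sides of \eqref{eq:wc} and using the identity $\rho\omega(x)-\rho D_\omega(x,y)=\rho\omega(y)+\rho\langle\nabla\omega(y),x-y\rangle$ collapses the right-hand side into an affine minorant of $\phi$ that is tight at $y$, namely $\phi(x)\geq\phi(y)+\langle g+\rho\nabla\omega(y),x-y\rangle$ for all $x\in U$. Because such a supporting affine function exists at every $y\in U$, the function $\phi$ equals the pointwise supremum over $y$ of these affine functions (the supremum being attained at $x=y$), and a supremum of affine functions is convex; hence $f+\rho\omega$ is convex, i.e.\ (i) holds.

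The only step demanding real care is the appeal to subdifferentiability in (ii)$\Rightarrow$(iii): it relies on openness of $U$ together with finiteness of $f$ to guarantee a subgradient at interior points (without openness one could only assert subgradients on the relative interior, which is why the proposition is stated on $U\supseteq X$). Everything else is bookkeeping, the main recurring computation being the Bregman cancellation $\rho\omega(x)-\rho D_\omega(x,y)=\rho\omega(y)+\rho\langle\nabla\omega(y),x-y\rangle$ that converts the weak-convexity inequality into a true affine support and back.
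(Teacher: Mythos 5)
Your proposal is correct and follows essentially the same route as the paper's proof: the (i)$\Leftrightarrow$(ii) step by absorbing the affine part of the Bregman divergence, (ii)$\Rightarrow$(iii) by taking a subgradient of $f_\rho(\cdot;y)$ at $y$, and (iii)$\Rightarrow$(i) by rearranging back into a supporting affine minorant of $f+\rho\omega$. The only cosmetic differences are that you prove (i)$\Leftrightarrow$(ii) in both directions at once and make explicit the "supremum of affine minorants" justification that the paper leaves implicit.
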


\begin{proof} (i)$\Rightarrow$ (ii) is straightforward. $(\rho,\omega(\cdot))$-RWC implies that $ f(x)+\rho\omega(x) $ is convex. Hence, 
	\begin{equation*}
		f_\rho(x;y)=f(x)+\rho \left[\omega(x)-\omega(y)-\langle\nabla\omega(y),x-y\rangle\right]=\left[f(x)+\rho\omega(x)\right]-\left[\rho\omega(y)+\langle \nabla\omega(y),x-y\rangle\right]
	\end{equation*}
	is equal to the sum of a convex function and an affine function, thus convex. (ii)$\Rightarrow$ (iii) is also straightforward. Since $f_\rho(x;y)$ is convex in $x\in U$, subgradients exist on $U$. Let $g\in\partial f_\rho(y;y)$. We have
	\begin{equation*}
		f(x)+\rho D_\omega(x,y)\geq f(y)+\rho D_\omega(y,y)+\langle g, x-y\rangle, \forall x\in U
	\end{equation*}
	Rearranging the terms,  we obtain the third statement. Lastly, we show (iii)$\Rightarrow$ (i). Invoking the definition of Bregman divergence, \eq{eq:wc} implies that for any $y\in U$, there exists $g\in\Xcal$
	\begin{eqnarray*}
		&&f(x)\geq f(y)+\langle g, x-y\rangle-\rho\left[\omega(x)-\omega(y)-\langle\nabla\omega(y),x-y\rangle\right], \forall x\in U\\
		&\Leftrightarrow& [f(x)+\rho\omega(x)]\geq [f(y)+\rho\omega(y)]+\la g+\rho\nabla\omega(y),x-y \ra,\forall x\in U
	\end{eqnarray*}
	This implies that $f(x)+\rho \omega(x)$ is convex, \ie, $f(x)$ is $(\rho,\omega(\cdot))$-RWC.
\end{proof}

In fact, the above results also provide a valid subdifferential set of relatively weakly convex functions and the construction of subgradients. 

\begin{defn}\label{def:RWC}\emph{(Subgradient and Subdifferential Set)} A vector $g\in\Xcal$ is a \emph{subgradient} of $f(x)$ at $x\in X$ if $g\in\partial f_\rho(x;x)$. The subdifferential set of $f(x)$ at $x$, denoted as $\partial f(x)$, contains all subgradients at $x$. Note that $\partial f(x)=\partial (f+\rho\omega)(x)-\rho\nabla\omega(x)$.
\end{defn}
For $\rho$-relatively weakly convex functions, the above subdifferential set is always well-defined and non-empty. When the function is weakly convex (thus locally Lipschitz), this set is also equivalent to the Fr\'echet subdifferential set and the Clarke differential set~\citep{davis2017proximally}.   

\paragraph{Examples.}A major class of relatively weakly convex functions is the family of smooth functions with Lipschitz continuous gradients.  Suppose $f(x)$ is continuously differentiable and has $\rho$-Lipschitz continuous gradient, \ie, $||\nabla f(x)-\nabla f(y)||_*\leq \rho||x-y||$, where $\|\cdot\|_*$ is the dual norm of $\|\cdot\|$, then by fundamental theorem of calculus, this implies that
$|f(x)-f(y)-\langle\nabla f(y),x-y\rangle|\leq\frac{\rho}{2}||x-y||^2$. 
Hence, it follows 
$$f(x)\geq f(y)-\langle\nabla f(y),x-y\rangle-\frac{\rho}{2}||x-y||^2\geq f(y)-\langle\nabla f(y),x-y\rangle-\rho D_\omega(x,y), 
$$
so $f(x)$ is relatively weakly convex. 
In the case when both $f(x)$ and $\omega(x)$ is twice differentiable, relative weak convexity is equivalent to say $\nabla^2 f(x)\succeq -\rho\nabla^2\omega(x)$. Hence, the family of relatively weakly convex functions also include functions that are not necessarily Lipschitz smooth, e.g., the relatively smooth functions~\citep{lu2018relatively}. Moreover, the following proposition gives some calculus and more examples of relatively weakly convex functions.
\begin{prop}
\label{prop:calculus} 
Let $X$ be a nonempty closed convex set. 
\begin{enumerate}
\item[(a)] Suppose $f_1:X\to\RR$ is $(\rho_1,\omega_1(\cdot))$-RWC and $f_2:X\to\RR$ is $(\rho_2,\omega_2(\cdot))$-RWC on $X$, then $f_1+f_2$ is $(\rho_1+\rho_2,\bar{\omega}(\cdot))$-RWC on $X$, where $\bar\omega(x)=(\rho_1+\rho_2)^{-1}(\rho_1\omega_1(x)+\rho_2\omega_2(x))$ is differentiable and $1$-strongly convex on $X$. 
\item[(b)] Suppose $f_i:\RR^n\to\RR$ is $(\rho_i,\omega(x))$-RWC for $i\in I$, and $\rho:=\sup_{i\in I}\rho_i<\infty$, then the supreme function $f(x):=\sup_{i\in I}f_i(x)$ is also $(\rho,\omega(x))$-RWC.
\item[(c)] Suppose $f:\RR^d\to\RR$ is closed convex and $L_f$-Lipschitz continuous such that $|f(u)-f(v)|\leq L_f \|u-v\|,\forall u,v\in\RR^d$,  and suppose $g:\RR^n\to\RR^d$ is $L_g$-relatively smooth with respect  to $\omega(x)$ such that for any $x,y\in\RR^n$
$$\|g(x)- g(y)-\la \nabla g(y),x-y\ra\|\leq L_g\cdot D_\omega(x,y).$$ Then the composition 
$f\circ g:\RR^n\to\RR$ is $(L_fL_g, \omega(\cdot))$-RWC. 
\end{enumerate}
\end{prop}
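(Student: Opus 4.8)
The plan is to verify all three parts by reducing them to the defining convexity property of relative weak convexity, using the equivalent characterizations in Proposition~\ref{prop:RWC}. Parts (a) and (b) are direct. For (a), observe that $f_1+\rho_1\omega_1$ and $f_2+\rho_2\omega_2$ are both convex by definition; since $(\rho_1+\rho_2)\bar\omega=\rho_1\omega_1+\rho_2\omega_2$, we have $(f_1+f_2)+(\rho_1+\rho_2)\bar\omega=(f_1+\rho_1\omega_1)+(f_2+\rho_2\omega_2)$, a sum of convex functions, hence convex. That $\bar\omega$ is admissible follows because it is a convex combination of the differentiable $\omega_1,\omega_2$, and $D_{\bar\omega}=\tfrac{\rho_1}{\rho_1+\rho_2}D_{\omega_1}+\tfrac{\rho_2}{\rho_1+\rho_2}D_{\omega_2}\ge\tfrac12\|\cdot\|^2$, so $\bar\omega$ is $1$-strongly convex. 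For (b), since all $f_i$ share the reference $\omega$, I would write $f_i+\rho\omega=(f_i+\rho_i\omega)+(\rho-\rho_i)\omega$, convex because $f_i+\rho_i\omega$ is convex and $(\rho-\rho_i)\omega$ is convex for $\rho\ge\rho_i\ge 0$; then the identity $\sup_i f_i+\rho\omega=\sup_i(f_i+\rho\omega)$ exhibits $f+\rho\omega$ as a supremum of convex functions, hence convex, so $f$ is $(\rho,\omega(\cdot))$-RWC.

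Part (c) is the substantive step, and here I would invoke characterization (iii) of Proposition~\ref{prop:RWC}: it suffices to produce, for each fixed $y$, a vector $h$ with $(f\circ g)(x)\ge (f\circ g)(y)+\langle h,x-y\rangle-L_fL_g\, D_\omega(x,y)$ for all $x$. Fix $y$ and select a subgradient $s\in\partial f(g(y))$; the natural chain-rule candidate is $h=\nabla g(y)^\top s$. Convexity of $f$ gives $f(g(x))\ge f(g(y))+\langle s,g(x)-g(y)\rangle$, and I would split $g(x)-g(y)=\nabla g(y)(x-y)+e$ with $e:=g(x)-g(y)-\nabla g(y)(x-y)$. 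The linear part reproduces $\langle s,\nabla g(y)(x-y)\rangle=\langle h,x-y\rangle$, and for the remainder Hölder's inequality yields $\langle s,e\rangle\ge-\|s\|_*\|e\|$.

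The argument closes on two facts: $L_f$-Lipschitz continuity of the convex $f$ bounds its subgradients by $\|s\|_*\le L_f$, while $L_g$-relative smoothness bounds $\|e\|\le L_g\, D_\omega(x,y)$; together they give $\langle s,e\rangle\ge -L_fL_g\, D_\omega(x,y)$, which substituted back produces exactly the inequality required by (iii) with $\rho=L_fL_g$. The one point to handle with care is that the norm in which $g$ is relatively smooth must be the same norm on $\RR^d$ whose dual $\|\cdot\|_*$ controls the Lipschitz constant of $f$, so that the Hölder step and the subgradient bound are compatible. I therefore expect the only real obstacle in (c) to be this bookkeeping of the chain-rule subgradient and the norm pairing; once the norms are aligned the estimate is routine, and parts (a) and (b) are immediate from convexity of sums and suprema.
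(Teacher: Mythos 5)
Your proof is correct and follows essentially the same route as the paper: parts (a) and (b) are the direct convexity arguments the paper labels ``straightforward,'' and your part (c) is exactly the paper's chain-rule estimate --- take $s\in\partial f(g(y))$, use convexity of $f$, split $g(x)-g(y)$ into its linearization plus the error term, and bound the error via H\"older, $\|s\|_*\le L_f$, and the relative-smoothness bound $\|e\|\le L_g D_\omega(x,y)$. Your remark about aligning the norm on $\RR^d$ with the dual norm controlling $\partial f$ is a point the paper glosses over, but it does not change the argument.
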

\begin{proof}
Parts (a) and (b) of Proposition~\ref{prop:calculus} are straightforward. Part (c) of Proposition~\ref{prop:calculus} is because that for any $x,y$, $w\in\partial f(g(y))$, it holds that 
\begin{eqnarray*}
f\circ g(x)&\geq& f\circ g(y)+\la w, g(x)-g(y)\ra\\
&=& f\circ g(y)+\la w, \nabla g(y)^T(x-y)\ra+\la w, g(x)-g(y)-g(y)^T(x-y)\ra\\
&\geq& f\circ g(y)+\la w, \nabla g(y)^T(x-y)\ra -L_g \|w\|_*\cdot D_\omega(x,y)\\
&\geq& f\circ g(y) + \la \nabla g(y)w, x-y\ra -L_fL_g D_\omega(x,y)
\end{eqnarray*}
The first inequality is due to the convexity of $f$; the third inequality is due to H\"older’s inequality; and the last inequality is due to the Lipschitz continuity of $f$. 
\end{proof}

\subsection{Bregman Moreau Envelope and Bregman Proximal Operator}
We now revisit the basic properties of Bregman divergence and introduce the stationary measures based on Bregman Moreau envelope. We first  list a few important properties of Bregman divergence that will be heavily used in the rest of the paper. 
\begin{lm}\label{lem:threepoint}
\emph{(Properties of Bregman Divergence, Section~9.2.1, \cite{beck17firstorder})}

	\begin{enumerate}[labelindent=1em,labelwidth=\widthof{\ref{last-item}},leftmargin=!]
		\item[(a)] The Bregman divergence satisfies the three-point identity:
		\begin{equation}\label{threeProII}
		D_{\omega}(x,y)+D_{\omega}(y,z)=D_{\omega}(x,z)+\langle\nabla\omega(z)-\nabla\omega(y),x-y\rangle, \forall x,y,z\in X
		\end{equation} 
		\item[(b)] Suppose $\phi(x)$ is convex and $ z^+=\underset{x\in{X}}{\argmin}\{\phi(x)+\frac{1}{\alpha}D_{\omega}(x,z)\}$ for some $\alpha>0$, then we have 
		\begin{equation}\label{threeProI}
		\phi(x)+\frac{1}{\alpha}D_{\omega}(x,z)\geq\phi(z^+)+\frac{1}{\alpha}D_{\omega}(z^+,z)+\frac{1}{\alpha}D_{\omega}(x,z^+),  \forall\ x\in{X}.
		\end{equation}
	\end{enumerate}
\end{lm}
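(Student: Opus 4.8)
The plan is to prove (a) by a direct algebraic expansion and then to bootstrap (b) from the first-order optimality condition of the proximal subproblem combined with the identity just established in (a).

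For part (a), I would substitute the definition $D_\omega(a,b)=\omega(a)-\omega(b)-\langle\nabla\omega(b),a-b\rangle$ into each of the three divergences appearing in \eq{threeProII} and collect terms. On the left-hand side the $\omega(y)$ contributions cancel, leaving $\omega(x)-\omega(z)$ together with two inner-product terms; matching the coefficients of $\nabla\omega(y)$ and of $\nabla\omega(z)$ against the right-hand side, using the decomposition $x-z=(x-y)+(y-z)$, yields the identity exactly. This step is purely computational and carries no real difficulty.

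For part (b), the key observation is that the objective $h(x):=\phi(x)+\frac{1}{\alpha}D_\omega(x,z)$ is convex in $x$ (since $\phi$ is convex and $D_\omega(\cdot,z)$ is convex by $1$-strong convexity of $\omega$), so that $z^+$ being its minimizer over the convex set $X$ is characterized by a standard variational inequality. Concretely, I would invoke the existence of a subgradient $g\in\partial\phi(z^+)$ such that $\langle g+\frac{1}{\alpha}(\nabla\omega(z^+)-\nabla\omega(z)),\,x-z^+\rangle\geq 0$ for all $x\in X$, using that the gradient of $x\mapsto\frac{1}{\alpha}D_\omega(x,z)$ equals $\frac{1}{\alpha}(\nabla\omega(x)-\nabla\omega(z))$. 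Combining this inequality with the subgradient bound $\phi(x)\geq\phi(z^+)+\langle g,x-z^+\rangle$ gives
\[
\phi(x)\geq\phi(z^+)+\frac{1}{\alpha}\langle\nabla\omega(z)-\nabla\omega(z^+),\,x-z^+\rangle.
\]

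The final step is to eliminate the raw inner product via part (a): setting $y=z^+$ in the three-point identity \eq{threeProII} yields $\langle\nabla\omega(z)-\nabla\omega(z^+),x-z^+\rangle=D_\omega(x,z^+)+D_\omega(z^+,z)-D_\omega(x,z)$, and substituting this into the displayed inequality, then moving the $\frac{1}{\alpha}D_\omega(x,z)$ term to the left, recovers exactly \eq{threeProI}. I expect the main (though mild) obstacle to be stating the optimality condition cleanly enough to accommodate simultaneously the convex constraint $X$ and a possibly nonsmooth $\phi$; both are covered by the standard characterization of minimizers of a convex function over a convex set, after which every remaining step is substitution and rearrangement.
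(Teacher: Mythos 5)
Your proposal is correct: part (a) is a direct expansion of the definition of $D_\omega$, and part (b) follows from the variational-inequality characterization of the minimizer $z^+$ (using a subgradient $g\in\partial\phi(z^+)$ together with the differentiability of $D_\omega(\cdot,z)$) combined with the three-point identity from part (a) at $y=z^+$, which is precisely the standard argument in the cited source (Beck, 2017, Section~9.2.1). The paper itself gives no proof of this lemma and defers entirely to that reference, so your derivation supplies exactly the intended argument.
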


Below we provide the definitions of Bregman Moreau envelope and Bregman proximal operator, which are natural extensions of Moreau envelope and proximal operator by replacing Euclidean distance with Bregman divergence~\citep{bauschke2006joint}.
Because of the asymmetry of Bregman divergence, we should be careful when extending Moreau envelope directly to the Bregman case. We consider the \emph{(left) Bregman envelope} and the \emph{(left) Bregman proximal operator} here\footnote{In fact, there are also ``right" versions of the Bregman envelope and proximal operator, with some different properties~\citep{bauschke2006joint}, but here we will focus on the left version.}.

\begin{defn} \emph{(Bregman Moreau envelope and proximal operator)} Given positive number $\lambda>0$ and a function $T(x)$, for a vector $z\in X$, we define its Bregman Moreau envelope as 
\begin{equation}
T_{\lambda}(z):
=\min_{x\in{X}}\left\{T(x)+\frac{1}{\lambda}D_{\omega}(x,z)\right\}
\end{equation}
and the corresponding Bregman proximal operator
\begin{equation}
\prox_{\lambda T}(z)
:=\underset{x\in{X}}{\argmin}\left\{T(x)+\frac{1}{\lambda}D_{\omega}(x,z)\right\}
\end{equation}
\end{defn}

 It is obvious that when the function $T(\cdot)$ is convex, then the proximal operator is always well-defined and unique for any positive number $\lambda>0$. In fact, this holds true for any $(\rho,\omega(\cdot))$-RWC functions as long as $0<\lambda< \rho^{-1}$. More specifically, we have 
\begin{lm}\label{lem:uniqueness}
\emph{(Uniqueness of Bregman proximal operator)} Suppose a function $T(x)$ is $(\rho,\omega(\cdot))$-RWC on $X$ and $0<\lambda<\rho^{-1}$. Then for any input $z\in X$,  the function $T(x)+\frac{1}{\lambda}D_\omega(x,z)$ is  $(\lambda^{-1}-\rho)$-strongly convex. Moreover, the Bregman proximal operator $ \prox_{\lambda T}(z) $ is unique.
\end{lm}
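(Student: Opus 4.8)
The plan is to prove the claimed strong convexity directly from the definition of relative weak convexity, and then obtain uniqueness (and existence) of the minimizer as a standard consequence of strong convexity over a nonempty closed convex set. Throughout I fix $z\in X$ and regard $h(x):=T(x)+\frac{1}{\lambda}D_\omega(x,z)$ as a function of $x$ alone.

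First I would expand the Bregman divergence through its definition, $D_\omega(x,z)=\omega(x)-\omega(z)-\langle\nabla\omega(z),x-z\rangle$, and regroup terms so that
$$h(x)=\big[T(x)+\rho\,\omega(x)\big]+\big(\tfrac{1}{\lambda}-\rho\big)\omega(x)-\tfrac{1}{\lambda}\big[\omega(z)+\langle\nabla\omega(z),x-z\rangle\big].$$
The purpose of this regrouping is that each summand has a transparent convexity property. The bracketed term $T(x)+\rho\,\omega(x)$ is convex by the very definition of $(\rho,\omega(\cdot))$-RWC. The middle term is a positive multiple of $\omega$: since $\lambda<\rho^{-1}$ the coefficient $\tfrac{1}{\lambda}-\rho$ is strictly positive, so by the $1$-strong convexity of $\omega$ this term is $(\tfrac{1}{\lambda}-\rho)$-strongly convex with respect to $\|\cdot\|$. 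The final term is affine in $x$ and therefore does not affect convexity moduli. Invoking the elementary fact that the sum of a convex function, a $\mu$-strongly convex function, and an affine function is again $\mu$-strongly convex, with $\mu=\tfrac{1}{\lambda}-\rho$, I conclude that $h$ is $(\tfrac{1}{\lambda}-\rho)$-strongly convex, which establishes the first assertion.

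Uniqueness then follows from strong convexity. A $\mu$-strongly convex, lower semicontinuous function on a nonempty closed convex set is coercive, hence attains its minimum; and the minimizer must be unique, since if $x_1\neq x_2$ both minimized $h$, applying the strong convexity inequality at their midpoint would yield a strictly smaller value than the common minimum, a contradiction. Therefore $\prox_{\lambda T}(z)$ is well-defined and unique.

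I do not anticipate a genuine obstacle here, as the argument is essentially bookkeeping. The only point requiring care is the regrouping step: isolating the affine remainder $-\tfrac{1}{\lambda}[\omega(z)+\langle\nabla\omega(z),x-z\rangle]$ from the expansion of $D_\omega(x,z)$, and confirming that the modulus $\tfrac{1}{\lambda}-\rho$ is strictly positive precisely because of the hypothesis $\lambda<\rho^{-1}$, which is exactly what guarantees a nonnegative strong-convexity constant and hence the existence of a unique proximal point.
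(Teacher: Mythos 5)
Your proof is correct and follows exactly the route the paper intends: the paper simply asserts that the result ``follows directly from the definition of relative weak convexity,'' and your regrouping of $T(x)+\frac{1}{\lambda}D_\omega(x,z)$ into the convex part $T+\rho\omega$, the $(\lambda^{-1}-\rho)$-strongly convex multiple of $\omega$, and an affine remainder is precisely the omitted bookkeeping. The subsequent midpoint/coercivity argument for uniqueness is standard and sound.
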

The result follows directly from the definition of relative weak convexity. Same as the Euclidean case, one can also show that the Bregman Moreau envelope is differentiable. 
	

\begin{lm}\label{lem:gradient}
	\emph{(Gradient of Bregman Moreau envelope)} Suppose $T(x)$ is a proper closed function and $(\rho, \omega(\cdot))$-RWC on $X$, and  and $0<\lambda<\rho^{-1}$. Suppose the above DGF $\omega(x)$ is also twice continuously differentiable.  Then the Bregman Moreau envelope $T_\lambda(z)$ is differentiable, and its gradient is given by 
	\begin{equation}
	\nabla T_{\lambda}(z) =\frac{1}{\lambda}\nabla^2\omega(z)(z-\prox_{\lambda T}(z)).
	\end{equation}
\end{lm}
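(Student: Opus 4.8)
The plan is a self-contained Danskin-type (envelope) argument. Write $\hat z:=\prox_{\lambda T}(z)$, the unique minimizer of $\phi_z(x):=T(x)+\tfrac{1}{\lambda}D_\omega(x,z)$ by Lemma~\ref{lem:uniqueness}, and recall from that lemma that $\phi_z$ is $(\lambda^{-1}-\rho)$-strongly convex. The crucial structural observation is that the dependence of $\phi_z(x)$ on $z$ sits entirely in the smooth Bregman term: since $\omega$ is twice continuously differentiable, $z\mapsto D_\omega(x,z)$ is $C^2$, and differentiating $D_\omega(x,z)=\omega(x)-\omega(z)-\la\nabla\omega(z),x-z\ra$ in its second argument gives $\nabla_z D_\omega(x,z)=\nabla^2\omega(z)(z-x)$. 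Thus the candidate gradient is $g(z):=\tfrac{1}{\lambda}\nabla^2\omega(z)(z-\hat z)$, and I would prove differentiability by sandwiching the increment $T_\lambda(z')-T_\lambda(z)$ between two matching first-order expansions.

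For the upper bound I would feed the fixed point $\hat z$ as a feasible candidate into the problem defining $T_\lambda(z')$, so that $T_\lambda(z')-T_\lambda(z)\le \tfrac{1}{\lambda}\big(D_\omega(\hat z,z')-D_\omega(\hat z,z)\big)$. Taylor-expanding $z\mapsto D_\omega(\hat z,z)$ to first order, with second-order remainder controlled by the locally bounded continuous Hessian $\nabla^2\omega$, yields $T_\lambda(z')-T_\lambda(z)\le \la g(z),z'-z\ra+o(\|z'-z\|)$. This direction is immediate precisely because $\hat z$ does not move with $z'$.

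The lower bound is the delicate half and is where I expect the main obstacle to lie. Using $\hat z':=\prox_{\lambda T}(z')$ as the feasible candidate for the problem at $z$ gives $T_\lambda(z')-T_\lambda(z)\ge \tfrac{1}{\lambda}\big(D_\omega(\hat z',z')-D_\omega(\hat z',z)\big)$, whose first-order expansion involves $\nabla^2\omega(z)(z-\hat z')$ evaluated at the shifted point $\hat z'$ rather than $\hat z$. To collapse this back onto $g(z)$ up to $o(\|z'-z\|)$, I must show the proximal map is continuous, so that the discrepancy $\la\nabla^2\omega(z)(\hat z-\hat z'),z'-z\ra$ is $o(1)\cdot\|z'-z\|=o(\|z'-z\|)$. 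I would establish this continuity from strong convexity: invoking the strong-convexity lower bound of $\phi_z$ at its minimizer $\hat z$ tested at $\hat z'$, together with that of $\phi_{z'}$ at $\hat z'$ tested at $\hat z$, and adding the two, the $T$-terms cancel and the Bregman terms telescope to $\la\nabla\omega(z')-\nabla\omega(z),\hat z'-\hat z\ra$; Cauchy--Schwarz then gives $\|\hat z-\hat z'\|\le \tfrac{1}{1-\lambda\rho}\|\nabla\omega(z)-\nabla\omega(z')\|_*$, so continuity (indeed local Lipschitzness) of $\nabla\omega$ forces $\hat z'\to\hat z$ as $z'\to z$.

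Combining the two bounds yields $T_\lambda(z')-T_\lambda(z)=\la g(z),z'-z\ra+o(\|z'-z\|)$, i.e. $T_\lambda$ is differentiable at $z$ with $\nabla T_\lambda(z)=g(z)=\tfrac{1}{\lambda}\nabla^2\omega(z)(z-\prox_{\lambda T}(z))$, as claimed. The only genuinely nonroutine ingredient is the proximal-continuity estimate driven by strong convexity; the remainder is a two-sided Taylor sandwich. I would also note that $z$ is taken in the (relative) interior where $\nabla^2\omega$ exists, which is what legitimizes the expansions.
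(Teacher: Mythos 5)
Your argument is correct, but it takes a genuinely different route from the paper: the paper does not prove this lemma from scratch, it simply invokes Propositions 3.10 and 3.12 of \cite{bauschke2006joint} applied to the convex function $\lambda T(x)+\omega(x)$ (the hypothesis $0<\lambda<\rho^{-1}$ is exactly what makes that function convex, so the cited results on Bregman--Moreau envelopes of convex functions transfer). You instead give a self-contained two-sided envelope (Danskin-type) argument: an upper bound by freezing the minimizer $\hat z$, a lower bound using $\hat z'$ as a competitor, and the continuity estimate $\|\hat z-\hat z'\|\le(1-\lambda\rho)^{-1}\|\nabla\omega(z)-\nabla\omega(z')\|_*$ obtained by adding the two strong-convexity inequalities at the respective minimizers; your telescoping of the Bregman terms to $\langle\nabla\omega(z')-\nabla\omega(z),\hat z'-\hat z\rangle$ checks out, and the last step is the $\|\cdot\|$--$\|\cdot\|_*$ pairing inequality rather than Cauchy--Schwarz proper. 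What your route buys is transparency: it shows precisely where $\lambda<\rho^{-1}$ enters (through the modulus $\lambda^{-1}-\rho$), it yields local Lipschitz continuity of $\prox_{\lambda T}$ as a reusable by-product, and it avoids the translation step between the RWC setting and the convex setting of the cited reference; what the paper's route buys is brevity. Two points to make explicit if you write this up: (i) in the lower bound the first-order Taylor remainder of $z\mapsto D_\omega(\hat z',z)$ must be controlled uniformly as the base point $\hat z'$ varies, which holds because your continuity estimate confines $\hat z'$ to a bounded neighborhood of $\hat z$ while $\nabla^2\omega$ is continuous; (ii) the inequality $\phi_z(y)\ge\phi_z(\hat z)+\tfrac{1}{2}(\lambda^{-1}-\rho)\|y-\hat z\|^2$ at a \emph{constrained} minimizer requires combining strong convexity with the first-order optimality condition over $X$, not just unconstrained stationarity.
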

The result follows immediately from Propositions 3.10 and 3.12 in \cite{bauschke2006joint} by using the convexity of $\lambda T(x)+\omega(x)$. For sake of simplicity, we do not repeat the details here.

\subsection{Stationarity Measures}

Since the major goal of solving a general nonsmooth nonconvex problem is to find a stationary point, we are mainly interested in analyzing the stationary convergence of the SMD algorithm. For the general constrained composite problem in the form of \eq{eq:obj},  a stationary point $x^*$ can often be described as such that $0\in\partial (T+\delta_X)(x^*)$, or equivalently, $\text{dist}_{\|\cdot\|}(0,\partial (T+\delta_X)(x^*))=0$. Here we use $ \text{dist}_{\|\cdot\|}(x,Q)\coloneqq\text{inf}_{y\in Q}||y-x|| $ to characterize the distance between a point $x\in X$ and a set $Q$ under a specific norm $\|\cdot\|$ and we use $ \delta_X(\cdot) $ to denote the indicator function of the set $ X $.  

Inspired by~\cite{davis2018stochastic}, a natural option to measure the stationarity of a candidate solution $x\in X$ is by evaluating the difference of $x$ and its proximity: 
\beq{eq:measure2}
\Gcal_\lambda(x):=\frac{1}{\lambda}(x-\prox_{\lambda T}(x)), \text{ where } \lambda\in(0,\rho^{-1}).
\eeq 
In the case when the DGF is $\omega(x)=\frac{1}{2}\|x\|^2_2$, it follows immediately from Lemma~\ref{lem:gradient} that $\Gcal_\lambda(x)=\nabla T_{\lambda}(z)$, \ie, the gradient of the Moreau envelope of $T(x)$~\citep{davis2018stochastic}. Under such a case, invoking the definition of proximal operator, and denoting $ \hat{x}\coloneqq\prox_{\lambda T}(x) $, we have $ \Gcal_\lambda(x)\in\partial (T+\delta_X)(\hat{x}) $, so one can show that 
\beq{eq:distance}
\text{dist}^2_{\|\cdot\|_2}(0,\partial (T+\delta_X)(\hat{x}))\leq \|\Gcal_\lambda(x)\|_2^2=\|\nabla T_{\lambda}(x)\|_2^2.
\eeq
Hence, the magnitude of $ \Gcal_\lambda(x) $ provides an upper bound for the distance from the origin to the subdifferential set $\partial f(x)$, and can be used to measure the progress of iterations. In our case, for general choices of distance generating functions $\omega(x)$, this also makes sense. From Lemma~\ref{lem:gradient}, suppose $\omega(x)$ is twice continuously differentiable, we have 
$$
\Gcal_\lambda(x)=\big(\nabla^2\omega(x)\big)^{-1}\nabla T_{\lambda}(x),$$ 
which can be viewed as a rescaled gradient of the Bregman Moreau envelope. Then with the assumption that $ \omega(x) $ is $1$-strongly convex with respect to $\|\cdot\|$-norm, we have $\|\Gcal_\lambda(x)\|\leq \| \nabla T_{\lambda}(x)\|$.  Moreover, from the definition of Bregman proximal operator $\hat{x}=\prox_{\lambda T}(x)$, we have
\begin{equation}
0\in\partial (T+\delta_X)(\hat{x})+\frac{1}{\lambda}\big(\nabla\omega(\hat{x})-\nabla\omega(x)\big)\approx\partial (T+\delta_X)(\hat{x})+\nabla^2\omega(x)\Gcal_\lambda(x)
\end{equation}
where the approximation is based on the first-order Taylor expansion of $ \nabla\omega(\cdot) $. 
Hence when $\|\Gcal_\lambda(x)\|$ is small, it indicates that the origin is near the set $\partial(T+\delta_X)(x)$, \ie, $\hat{x}$ is close to a stationary point.

To better capture the geometry of the non-Euclidean setup, we propose to measure the stationarity of a candidate solution through evaluating the Bregman divergence the solution and its proximity: 
\beq{eq:Bregman_measure2}
\Delta_\lambda(x):=\frac{1}{\lambda^2}\cdot\left(D_\omega(x,\prox_{\lambda T}(x))+D_\omega(\prox_{\lambda T}(x),x)\right).
\eeq 
We call this the \emph{Bregman stationarity} measure. It follows immediately from the $1$-strongly convexity of $\omega(\cdot)$ that $\|\Gcal_\lambda(x)\|^2\leq\Delta_\lambda(x)$. Hence, the measure $\Delta_\lambda(x)$ yields a stronger convergence criterion than using the squared norm of the Bregman gradient mapping. Further, suppose the distance generating function $\omega(x)$ has $M$-Lipschitz continuous gradient, then we have
\beq{eq:distance_general}
\text{dist}^2_{\|\cdot\|}(0,\partial (T+\delta_X)(\hat{x}))\leq \frac{1}{\lambda^2}\|\nabla\omega(x)-\nabla\omega(\hat{x}) \|^2\leq \frac{M}{\lambda^2}\langle\nabla\omega(x)-\nabla\omega(\hat{x}),x-\hat{x}\rangle=M\cdot\Delta_\lambda(x).
\eeq
Hence, the measure defined by $\Delta_\lambda(x)$ provides a valid characterization of  the stationarity of a candidate solution in terms of the norm $\|\cdot\|$. In particular, for the $\ell_1$-setup with  $\|\cdot\|=\|\cdot\|_1$, the squared distance defined by $\ell_1$-norm in \eq{eq:distance_general} could be of order $\Ocal(n)$ larger than that defined by $\ell_2$-norm in \eq{eq:distance}. 

\section{Stationary Convergence of Stochastic Mirror Descent (SMD)}
\label{sec:main}
 In this section, we formally describe the problem setting and assumptions, and revisit the stochastic mirror descent algorithm. We will then discuss its convergence behavior in terms of the previously defined stationarity measure. 

\subsection{Problem Setting and Assumptions}
We consider the general composite stochastic optimization problem:
\begin{equation}\label{eq:obj2}
\min_{x\in{X}}\;T(x):= f(x)+r(x)=\EE_{\xi}\left[F(x;\xi)\right]+r(x)
\end{equation} 
under the following assumptions:
\begin{assume}\label{AssumeSMD3}
	We assume that
	\begin{enumerate}[label=(\roman*),labelindent=1em,labelwidth=\widthof{\ref{last-item}},leftmargin=!]
		\item The set $ {X}\subseteq\mathcal{X} $ is a closed convex subset of a finite-dimensional Euclidean space $ \mathcal{X} $.
		\item The function $f(x)$ is $(\rho,\omega(\cdot))$-RWC on $X$, for some function $ \omega(\cdot) $ that is continuously differentiable and  1-strongly convex (1-SC) on $X$ with respect to the norm $\|\cdot\|$ defined on the  Euclidean space $ \mathcal{X} $.  
		\item There exists a stochastic oracle that outputs a random vector $G(x,\xi)$ given input $ x\in X $,  such that
		\begin{equation}
		\mathbb{E}_{\xi}\Big[G(x,\xi)\Big]\in\partial f(x)
		\end{equation}
		where $\partial f(x)$ is the subdifferential set of $f(x)$ at $x$. Moreover, we assume there exists a constant $L>0$, such that $\forall x\in X$
		\beq{eq:variance}
		\EE[\|G(x,\xi)\|_*^2]\leq L^2;
		\eeq
		This is sometimes called $L$-stochastically continuity of $f(x)$~\citep{lu2017relative}.
		\item The function $ r(x):X\to\RR $ is proper, closed, convex, {nonnegative} and perhaps nonsmooth.
		\item The  optimal objective value, denoted as $ T_{\min}$, exists and $T_{\min}>-\infty $.
	\end{enumerate}
\end{assume}

Note that here we assume the term $ r(\cdot) $ to be nonnegative, which is a common assumption for proximal algorithms in the literature; see e.g., \cite{duchi2010composite} and \cite{beck17firstorder}. This assumption is also satisfied by a wide range of regularizations used in practical applications.

\subsection{Stochastic Mirror Descent (SMD)}
We now formally present the SMD algorithm as outlined in Algorithm~\ref{SMD2} below. Here we are going to use $\omega(x)$ as the distance generating function for the Bregman divergence used in the SMD algorithm. For the sake of generality, we will adopt the proximal variant of SMD, which has been extensively studied for convex problems; see, e.g.,~\cite{duchi2010composite},~\cite{he2015saddle}, and~\cite{beck17firstorder}. The only modifications we make is that when generating an output solution after $N$ iterations, we will randomly pick one from the sequence $\{x_0,x_1,\ldots,x_{N-1}\}$ according to a fixed distribution based on the stepsizes. 

\begin{algorithm} 
	\caption{Stochastic Mirror Descent (SMD)}
	\label{SMD2}  
	\begin{algorithmic} 
		\STATE Input { $x_0,  N,\{\alpha_t\}_{t=0}^{N-1}$}
		\FOR {$ t=0 $ to $ N-1 $}
		\STATE Obtain $G_t\coloneqq G(x_t,\xi_t)$ from the stochastic oracle
		\STATE Update $ x_{t+1}
		=\argmin_{x\in X}
		\left\{\langle G_t, x \rangle+ r(x)+\frac{1}{\alpha_t}D_{\omega}(x,x_t)\right\} $
		\ENDFOR
		\STATE Output $ x_R $ from $ \{x_0, \ldots, x_{N-1}\} $ with probability as $ P(R=i)=\frac{\alpha_i}{\sum_{t=0}^{N-1}\alpha_t},\ (i=0,1,\cdots,N-1) $ 
	\end{algorithmic}  
\end{algorithm}
We emphasize that the SMD algorithm significantly differs from the RSPG algorithm proposed in \cite{ghadimi2016mini} in several aspects: first, we don't need to use mini-batch samples to construct the subgradient estimator; second, the stepsize has to be decaying or in the order of $\Ocal(1/\sqrt{N})$ rather than a large constant; third, the probability mass function for selecting a random output is much simpler.  

\subsection{Convergence Results}

Below we present the stationary convergence result of SMD. 
\begin{thm}\label{thm:main}
	\emph{(Stationary Convergence of SMD)}
	Let $ x_R $ be the output of the SMD algorithm after $N$ iterations with non-increasing stepsize $ \alpha_t>0, t=0,\ldots, N-1$. Then we have for any $\hat\rho$ such that $\hat\rho>\rho$, 
\begin{equation}\label{SMDcase3Result}
	\mathbb{E}[\Delta_{1/\hat{\rho}}(x_R)]\leq\frac{\hat{\rho}}{\hat{\rho}-\rho}\cdot\frac{T_{1/\hat{\rho}}(x_0)-T_{\min}+\hat{\rho}\alpha_0 r(x_0)+\frac{\hat{\rho}L^2}{2}\sum_{t=0}^{N-1}\alpha_t^2}{\sum_{t=0}^{N-1}\alpha_t},
	\end{equation}
	where $ \Delta_{1/\hat{\rho}}(x_R)$ is as defined in~\eq{eq:Bregman_measure2} and the expectation is taken with respect to $R$ and $(\xi_0,\cdots,\xi_{N-1})$. 
\end{thm}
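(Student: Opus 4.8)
The plan is to use the Bregman Moreau envelope $T_{1/\hat\rho}(\cdot)$ as a Lyapunov (potential) function and to establish an approximate one-step descent inequality along the SMD iterates. Write $\lambda=1/\hat\rho$ and $\hat x_t:=\prox_{\lambda T}(x_t)$, and let $\mathcal F_t$ denote the $\sigma$-algebra generated by $\xi_0,\dots,\xi_{t-1}$, so that $x_t$ and $\hat x_t$ are $\mathcal F_t$-measurable while $\mathbb{E}[G_t\mid\mathcal F_t]=:g_t\in\partial f(x_t)$ and $\mathbb{E}[\|G_t\|_*^2\mid\mathcal F_t]\le L^2$ by Assumption~\ref{AssumeSMD3}. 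The first step is to bound the envelope at the next iterate by feeding the \emph{previous} prox point $\hat x_t$ into the minimization defining $T_\lambda(x_{t+1})$, giving $T_\lambda(x_{t+1})\le T(\hat x_t)+\hat\rho D_\omega(\hat x_t,x_{t+1})$. I would then control $D_\omega(\hat x_t,x_{t+1})$ by applying the three-point prox inequality of Lemma~\ref{lem:threepoint}(b) to the SMD update with the convex function $\phi(x)=\langle G_t,x\rangle+r(x)$ and test point $x=\hat x_t$; after using $T(\hat x_t)+\hat\rho D_\omega(\hat x_t,x_t)=T_\lambda(x_t)$ this yields a one-step relation of the form $T_\lambda(x_{t+1})\le T_\lambda(x_t)+\hat\rho\alpha_t\langle G_t,\hat x_t-x_{t+1}\rangle+\hat\rho\alpha_t\big(r(\hat x_t)-r(x_{t+1})\big)-\hat\rho D_\omega(x_{t+1},x_t)$.

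Next I would split $\langle G_t,\hat x_t-x_{t+1}\rangle=\langle G_t,\hat x_t-x_t\rangle+\langle G_t,x_t-x_{t+1}\rangle$ and absorb the second piece against $-\hat\rho D_\omega(x_{t+1},x_t)$: using $D_\omega(x_{t+1},x_t)\ge\tfrac12\|x_{t+1}-x_t\|^2$ together with Young's inequality produces the noise term $\tfrac{\hat\rho\alpha_t^2}{2}\|G_t\|_*^2$, whose conditional expectation is at most $\tfrac{\hat\rho\alpha_t^2 L^2}{2}$. Taking $\mathbb{E}[\cdot\mid\mathcal F_t]$, unbiasedness turns $\langle G_t,\hat x_t-x_t\rangle$ into $\langle g_t,\hat x_t-x_t\rangle$, which I bound by the relative-weak-convexity inequality \eqref{eq:wc} of Proposition~\ref{prop:RWC}, namely $\langle g_t,\hat x_t-x_t\rangle\le f(\hat x_t)-f(x_t)+\rho D_\omega(\hat x_t,x_t)$. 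Collecting $f(\hat x_t)+r(\hat x_t)=T(\hat x_t)$ and rewriting $-f(x_t)=-T(x_t)+r(x_t)$ leaves a descent relation driven by $T(\hat x_t)-T(x_t)$, plus a stray regularizer difference $r(x_t)-\mathbb{E}[r(x_{t+1})\mid\mathcal F_t]$.

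The crux is to convert $T(\hat x_t)-T(x_t)$ into the \emph{symmetric} Bregman stationarity $\Delta_\lambda(x_t)$. For this I would exploit the $(\hat\rho-\rho)$-strong convexity of $x\mapsto T(x)+\hat\rho D_\omega(x,x_t)$ from Lemma~\ref{lem:uniqueness}: writing this objective as a convex part $p(x)=f(x)+\rho D_\omega(x,x_t)+r(x)$ plus $(\hat\rho-\rho)D_\omega(x,x_t)$, the optimality of $\hat x_t$ supplies a subgradient $u=-(\hat\rho-\rho)\big(\nabla\omega(\hat x_t)-\nabla\omega(x_t)\big)\in\partial p(\hat x_t)$, and the subgradient inequality for $p$ at $x_t$ combined with the three-point identity \eqref{threeProII} (applied at $(\hat x_t,x_t,\hat x_t)$) produces exactly $T(x_t)\ge T_\lambda(x_t)+(\hat\rho-\rho)D_\omega(x_t,\hat x_t)$, equivalently $T(\hat x_t)-T(x_t)\le-\hat\rho D_\omega(\hat x_t,x_t)-(\hat\rho-\rho)D_\omega(x_t,\hat x_t)$. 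Substituting, the Bregman terms assemble, via $\Delta_\lambda(x_t)=\hat\rho^2\big(D_\omega(x_t,\hat x_t)+D_\omega(\hat x_t,x_t)\big)$, into the clean recursion
\[
\mathbb{E}[T_\lambda(x_{t+1})\mid\mathcal F_t]\le T_\lambda(x_t)-\tfrac{\alpha_t(\hat\rho-\rho)}{\hat\rho}\Delta_\lambda(x_t)+\hat\rho\alpha_t\big(r(x_t)-\mathbb{E}[r(x_{t+1})\mid\mathcal F_t]\big)+\tfrac{\hat\rho\alpha_t^2 L^2}{2}.
\]

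Finally I would take total expectations, rearrange, and sum over $t=0,\dots,N-1$. The envelope telescopes, and I would drop the last iterate using $\mathbb{E}[T_\lambda(x_N)]\ge T_{\min}$ (since $D_\omega\ge 0$). The regularizer differences are handled by Abel summation: because $\{\alpha_t\}$ is non-increasing and $r\ge 0$ (Assumption~\ref{AssumeSMD3}), the weighted sum $\sum_t\alpha_t\big(r(x_t)-r(x_{t+1})\big)$ is bounded above by $\alpha_0 r(x_0)$, which is precisely where the $\hat\rho\alpha_0 r(x_0)$ term originates. Dividing by $\tfrac{\hat\rho-\rho}{\hat\rho}\sum_t\alpha_t$ and recognizing that the stepsize-weighted average on the left equals $\mathbb{E}[\Delta_{1/\hat\rho}(x_R)]$ under the sampling law $P(R=i)\propto\alpha_i$ gives the claimed bound. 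I expect the main obstacle to be the third step: engineering the second, ``reversed'' divergence $D_\omega(x_t,\hat x_t)$ so that the two asymmetric Bregman divergences combine into the symmetric $\Delta_\lambda$, together with the careful bookkeeping of the regularizer across iterations, since $r$ is evaluated at $x_{t+1}$ rather than $x_t$ and only its nonnegativity and the stepsize monotonicity rescue the telescoping.
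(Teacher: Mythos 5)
Your proposal is correct and follows essentially the same route as the paper's proof: bounding $T_{1/\hat\rho}(x_{t+1})$ via the previous prox point $\hat x_t$, the three-point inequality of Lemma~\ref{lem:threepoint}(b), relative weak convexity plus Young's inequality for the noise, Abel summation with $r\ge 0$ and non-increasing stepsizes, and the $(\hat\rho-\rho)$-strong convexity of the prox subproblem to produce the symmetric $\Delta_{1/\hat\rho}$. The only cosmetic difference is that you apply the strong-convexity conversion to $\Delta_\lambda(x_t)$ at every iteration before summing, whereas the paper telescopes the quantity $T(x_t)-T(\hat x_t)-\rho D_\omega(\hat x_t,x_t)$ first and converts once at the randomized output $x_R$; the two are equivalent.
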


The above theorem provides the first characterization of the non-asymptotic convergence behavior of the SMD algorithm in expectation. As discussed in previous section, the stationary measure $\Delta_{1/\hat{\rho}}(x)$ with $\hat{\rho}>\rho$ provides a meaningful way to evaluate the stationarity of a candidate solution and also captures the underlying geometry of the non-Euclidean setup. It is worth mentioning that this result  generalizes the recent convergence results~\citep{davis2018stochastic} for  stochastic projected subgradient method and stochastic proximal  subgradient method in a unified sense. In~\cite{davis2018stochastic}, the authors develop two different results and analysis for the projected and proximal versions of stochastic subgradient method. For the proximal version, their convergence result requires $\hat\rho\in(\rho,2\rho]$ and the stepsize $\alpha_t\leq 1/\hat{\rho}$ for algebraic purposes. However, such requirements are not needed in our analysis.

In particular, if we select the stepsize to be a constant and set $\hat\rho=2\rho$, our result yields

\begin{cor}\label{cor:rate}
	For a fixed number of iterations $N$, by setting the stepsize to be a constant $\alpha_t\equiv\frac{c}{\sqrt{N}}, t=0,1,\ldots, N-1$, for some positive scaler $c>0$, the solution $x_R$ generated by the SMD algorithm satisfies 
	\begin{equation}\label{SMDCase4Result}
	\mathbb{E}[\Delta_{1/(2\rho)}(x_R)]\leq2\cdot\Big(\frac{T_{1/(2\rho)}(x_0)-T_{\min}+\rho c^2L^2}{c\sqrt{N}}+\frac{r(x_0)}{N}\Big).
	\end{equation}
	\end{cor}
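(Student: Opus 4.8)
The final display, Corollary~\ref{cor:rate}, is the specialization $\hat\rho=2\rho$, $\alpha_t\equiv c/\sqrt N$ of Theorem~\ref{thm:main}, so the substance lies in the theorem; I sketch that argument and then read off the corollary. The plan is to use the Bregman Moreau envelope $T_{1/\hat\rho}$ as a Lyapunov function and show it decreases in expectation by an amount proportional to $\Delta_{1/\hat\rho}$. Write $\lambda=1/\hat\rho$; since $\hat\rho>\rho$ gives $\lambda<\rho^{-1}$, Lemma~\ref{lem:uniqueness} makes $\hat x_t:=\prox_{\lambda T}(x_t)$ well-defined, with $T_\lambda(x_t)=T(\hat x_t)+\hat\rho D_\omega(\hat x_t,x_t)$ by definition. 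Because $\hat x_t$ is merely feasible (not optimal) for the proximal subproblem centred at $x_{t+1}$, I get the one-step bound $T_\lambda(x_{t+1})\le T(\hat x_t)+\hat\rho D_\omega(\hat x_t,x_{t+1})$, hence $T_\lambda(x_{t+1})-T_\lambda(x_t)\le\hat\rho\big(D_\omega(\hat x_t,x_{t+1})-D_\omega(\hat x_t,x_t)\big)$.

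To control the divergence difference I would apply Lemma~\ref{lem:threepoint}(b) to the SMD update, with $\phi(x)=\la G_t,x\ra+r(x)$, centre $x_t$, stepsize $\alpha_t$, and test point $\hat x_t$, which yields
\[
D_\omega(\hat x_t,x_{t+1})-D_\omega(\hat x_t,x_t)\le \alpha_t\la G_t,\hat x_t-x_{t+1}\ra+\alpha_t\big(r(\hat x_t)-r(x_{t+1})\big)-D_\omega(x_{t+1},x_t).
\]
Splitting $\la G_t,\hat x_t-x_{t+1}\ra=\la G_t,\hat x_t-x_t\ra+\la G_t,x_t-x_{t+1}\ra$, the displacement piece $\hat\rho\alpha_t\la G_t,x_t-x_{t+1}\ra-\hat\rho D_\omega(x_{t+1},x_t)$ is bounded by $\tfrac12\hat\rho\alpha_t^2\|G_t\|_*^2$ via Young's inequality together with $D_\omega(x_{t+1},x_t)\ge\tfrac12\|x_{t+1}-x_t\|^2$. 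Taking the expectation $\EE_t$ conditioned on the history and using $\EE_t[G_t]=g_t\in\partial f(x_t)$ and $\EE_t[\|G_t\|_*^2]\le L^2$ turns this into the noise term $\tfrac12\hat\rho\alpha_t^2L^2$ and leaves the deterministic subgradient term $\hat\rho\alpha_t\la g_t,\hat x_t-x_t\ra$.

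The crux is to convert this subgradient term together with the value $T(\hat x_t)$ into a negative multiple of $\Delta_{1/\hat\rho}(x_t)$, and here relative weak convexity enters twice. First, Proposition~\ref{prop:RWC}(iii) at base point $x_t$ gives $\la g_t,\hat x_t-x_t\ra\le f(\hat x_t)-f(x_t)+\rho D_\omega(\hat x_t,x_t)$. Second, since $\psi_t(x):=f(x)+\rho D_\omega(x,x_t)+r(x)$ is convex (Proposition~\ref{prop:RWC}(ii)) and $\hat x_t=\argmin_x\{\psi_t(x)+(\hat\rho-\rho)D_\omega(x,x_t)\}$, a \emph{second} application of Lemma~\ref{lem:threepoint}(b) — now with effective stepsize $(\hat\rho-\rho)^{-1}$ and test point $x_t$ — gives the strong-convexity prox inequality
\[
T(\hat x_t)\le T(x_t)-\hat\rho D_\omega(\hat x_t,x_t)-(\hat\rho-\rho)D_\omega(x_t,\hat x_t).
\]
Substituting both, $f(\hat x_t)+r(\hat x_t)$ reassembles into $T(\hat x_t)$, the $+\rho D_\omega(\hat x_t,x_t)$ from the first bound cancels part of the $-\hat\rho D_\omega(\hat x_t,x_t)$ from the second, and $T(x_t)-f(x_t)=r(x_t)$, so what survives is exactly $-(\hat\rho-\rho)\big(D_\omega(\hat x_t,x_t)+D_\omega(x_t,\hat x_t)\big)=-\tfrac{\hat\rho-\rho}{\hat\rho^2}\Delta_{1/\hat\rho}(x_t)$. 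I expect this cancellation to be the main obstacle: it hinges on picking the convex splitting $\psi_t$ so that the first-order use of relative weak convexity and the strongly-convex prox inequality jointly reproduce \emph{both} Bregman terms of $\Delta_{1/\hat\rho}$ with the correct coefficient, and on noticing that $\hat x_t$ is simultaneously the minimizer for two different Lemma~\ref{lem:threepoint}(b) instances.

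Collecting everything gives the one-step estimate
\[
\tfrac{\hat\rho-\rho}{\hat\rho}\,\alpha_t\,\EE_t[\Delta_{1/\hat\rho}(x_t)]\le T_\lambda(x_t)-\EE_t[T_\lambda(x_{t+1})]+\hat\rho\alpha_t\big(r(x_t)-\EE_t[r(x_{t+1})]\big)+\tfrac12\hat\rho\alpha_t^2L^2.
\]
Taking full expectations and summing $t=0,\dots,N-1$, the envelope terms telescope to $T_{1/\hat\rho}(x_0)-\EE[T_\lambda(x_N)]\le T_{1/\hat\rho}(x_0)-T_{\min}$ (using $T_\lambda(x_N)\ge T_{\min}$), and the noise terms sum to $\tfrac12\hat\rho L^2\sum_t\alpha_t^2$. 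For the regularizer I would use summation by parts: since $\{\alpha_t\}$ is non-increasing and $r\ge0$, $\sum_t\alpha_t\big(\EE[r(x_t)]-\EE[r(x_{t+1})]\big)\le\alpha_0 r(x_0)$, which produces the boundary term $\hat\rho\alpha_0 r(x_0)$. Dividing by $\sum_t\alpha_t$, using that the output index satisfies $P(R=t)\propto\alpha_t$ so that $\sum_t\alpha_t\EE[\Delta_{1/\hat\rho}(x_t)]=\big(\sum_t\alpha_t\big)\EE[\Delta_{1/\hat\rho}(x_R)]$, and finally multiplying by $\hat\rho/(\hat\rho-\rho)$ delivers \eqref{SMDcase3Result}. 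The corollary is then routine arithmetic: $\hat\rho=2\rho$ makes the prefactor $2$, while $\alpha_t\equiv c/\sqrt N$ gives $\sum_t\alpha_t=c\sqrt N$ and $\sum_t\alpha_t^2=c^2$, reducing \eqref{SMDcase3Result} to \eqref{SMDCase4Result}.
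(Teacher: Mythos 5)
Your argument is correct and follows the paper's proof of Theorem~\ref{thm:main} essentially step for step: the same Lyapunov function $T_{1/\hat\rho}$, the same use of the suboptimality of $\hat x_t$ at $x_{t+1}$, the same application of Lemma~\ref{lem:threepoint}(b) to the SMD update, the same RWC subgradient bound, the same Young's-inequality treatment of the noise term, and the same strongly-convex prox inequality; the only cosmetic difference is that you apply that last inequality per iteration and then average, whereas the paper telescopes first and applies it at the random output $x_R$, which is equivalent. (One side note: substituting $\hat\rho=2\rho$ and $\alpha_0=c/\sqrt N$ into \eqref{SMDcase3Result} literally yields $2\rho\, r(x_0)/N$ rather than $r(x_0)/N$ inside the parentheses of \eqref{SMDCase4Result} --- a constant-factor discrepancy already present in the paper's own statement, not an error in your derivation.)
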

We can further optimize the choice of stepsize and obtain
\begin{cor}\label{cor:rate2}
Suppose that $T_{\min}$ is known and assume that we can initialize SMD with $x_0$ such that $r(x_0)=0$. Then by setting the stepsize to be $\alpha_t\equiv\frac{c}{\sqrt{N}},t=0,1,\ldots,N-1 $, such that
	\begin{equation}
	c=\sqrt{\frac{T_{1/(2\rho)}(x_0)-T_{\min}}{\rho L^2}},
	\end{equation}
we further have
	\begin{equation}
	\mathbb{E}[||\Gcal_{1/(2\rho)}(x_R)||^2]\leq \mathbb{E}[\Delta_{1/(2\rho)}(x_R)]\leq 
	\frac{4L\sqrt{\rho\big(T_{1/(2\rho)}(x_0)-T_{\min}\big)}}{\sqrt{N}}
	\end{equation}
\end{cor}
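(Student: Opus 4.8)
The plan is to derive Corollary~\ref{cor:rate2} as a direct specialization of the general bound \eq{SMDcase3Result} in Theorem~\ref{thm:main}, choosing $\hat\rho=2\rho$ and the constant stepsize $\alpha_t\equiv c/\sqrt{N}$, then optimizing over $c$. Since Corollary~\ref{cor:rate} already carries out the substitution $\hat\rho=2\rho$ and $\alpha_t\equiv c/\sqrt{N}$, I would begin either from \eq{SMDcase3Result} directly or from \eq{SMDCase4Result}; the latter is cleaner. Under the hypothesis $r(x_0)=0$, the term $r(x_0)/N$ in \eq{SMDCase4Result} vanishes, leaving
\begin{equation*}
\mathbb{E}[\Delta_{1/(2\rho)}(x_R)]\leq \frac{2\big(T_{1/(2\rho)}(x_0)-T_{\min}+\rho c^2 L^2\big)}{c\sqrt{N}}
= \frac{2\big(T_{1/(2\rho)}(x_0)-T_{\min}\big)}{c\sqrt{N}}+\frac{2\rho c L^2}{\sqrt{N}}.
\end{equation*}

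The key step is to minimize the right-hand side over the free scalar $c>0$. Writing the bound as $\frac{2}{\sqrt{N}}\big(A/c + \rho L^2 c\big)$ with $A:=T_{1/(2\rho)}(x_0)-T_{\min}$, I would apply the elementary AM-GM inequality $A/c+\rho L^2 c\geq 2\sqrt{\rho L^2 A}$, with equality precisely when $A/c=\rho L^2 c$, i.e. $c=\sqrt{A/(\rho L^2)}=\sqrt{(T_{1/(2\rho)}(x_0)-T_{\min})/(\rho L^2)}$, which is exactly the prescribed choice of $c$. Substituting this optimal $c$ gives $\frac{2}{\sqrt{N}}\cdot 2\sqrt{\rho L^2 A}=\frac{4L\sqrt{\rho(T_{1/(2\rho)}(x_0)-T_{\min})}}{\sqrt{N}}$, establishing the stated bound on $\mathbb{E}[\Delta_{1/(2\rho)}(x_R)]$. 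The leftmost inequality $\mathbb{E}[\|\mathcal{G}_{1/(2\rho)}(x_R)\|^2]\leq\mathbb{E}[\Delta_{1/(2\rho)}(x_R)]$ is immediate from the pointwise relation $\|\mathcal{G}_\lambda(x)\|^2\leq\Delta_\lambda(x)$ noted after \eq{eq:Bregman_measure2} (a consequence of the $1$-strong convexity of $\omega$), applied with $\lambda=1/(2\rho)$ and then taking expectations.

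There is essentially no obstacle here: the result is a one-variable optimization of an explicit upper bound, so the only things to verify are that $A=T_{1/(2\rho)}(x_0)-T_{\min}\geq 0$ (so that the square root and the optimal $c$ are real and positive) and that the chosen $c$ is admissible. Nonnegativity of $A$ follows because $T_{1/(2\rho)}(x_0)=\min_{x}\{T(x)+2\rho D_\omega(x,x_0)\}\geq T_{\min}$ whenever the envelope is well-defined, which holds under $\lambda=1/(2\rho)<\rho^{-1}$ by Lemma~\ref{lem:uniqueness}; the nonnegativity of the Bregman divergence and $T(x)\geq T_{\min}$ together give the inequality. The mild degenerate case $A=0$ (which would force $c=0$) corresponds to $x_0$ already being a fixed point of the proximal map, so both sides of the claimed bound are zero and the statement holds trivially; otherwise $c>0$ is a valid constant stepsize scaler and the AM-GM step is tight, yielding the claimed rate.
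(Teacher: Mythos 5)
Your proposal is correct and follows exactly the route the paper intends: Corollary~\ref{cor:rate2} is obtained from Corollary~\ref{cor:rate} by setting $r(x_0)=0$ and choosing $c$ to minimize the bound via AM--GM, with the leftmost inequality coming from the pointwise relation $\|\Gcal_\lambda(x)\|^2\leq\Delta_\lambda(x)$. Your added verification that $T_{1/(2\rho)}(x_0)-T_{\min}\geq 0$ (and the handling of the degenerate case) is a small extra rigor the paper leaves implicit, but it does not change the argument.
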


The above corollaries imply that the SMD algorithm converges to a stationary point in the rate of $\Ocal(1/\sqrt{N})$. In other words, to obtain an $\eps$-stationary solution such that $\EE[\Delta_{1/(2\rho)}(x_R)]\leq\epsilon$, the iteration complexity and sample complexity for SMD is at most $O\left(\frac{\rho L^2(T_{1/(2\rho)}(x_0)-T_{\min})}{\eps^2}\right)$. The order of sample complexity, \ie, $\Ocal(1/\eps^2)$ matches with that of existing algorithms, such as the RSPG algorithm~\citep{ghadimi2016mini}, the PGSG algorithm~\citep{davis2017proximally}, and the proximal stochastic subgradient algorithm~\citep{davis2018stochastic} for solving nonsmooth nonconvex optimization. 

\subsection{Convergence Analysis}
In this section, we provide the detailed proof for Theorem~\ref{thm:main}. \\
\begin{proof}
For sake of simplicity, in what follows, we will denote $\hat x:=\prox_{T/\hat{\rho}}(x)$ for any $x\in X$. First, by the definition of Bregman envelope, we have $T_{1/\hat{\rho}}(x_{t+1})=T(\hat{x}_{t+1})+\hat{\rho}D_{\omega}(\hat{x}_{t+1}, x_{t+1}) $. The optimality of $ \hat{x}_{t+1} $ implies
	\begin{equation}\label{eq:optimality1}
	T_{1/\hat{\rho}}(x_{t+1})\leq T(\hat{x}_t)+\hat{\rho}D_{\omega}(\hat{x}_t, x_{t+1}).
	\end{equation}
Recall the definition of $ x_{t+1} $ and apply the three-point property introduced in Lemma~\ref{lem:threepoint}(b) and equation \eq{threeProI} by setting $ z=x_t,\ z^+=x_{t+1},\ x=\hat{x}_t$ and $\alpha=\alpha_t$, $\phi(x)=\la G_t,x\ra+r(x)$. We have
	\begin{equation}\label{eq:optimality2}
	\alpha_t[\langle G_t, \hat{x}_t-x_{t+1}\rangle+r(\hat{x}_t)-r(x_{t+1})]\geq D_\omega(\hat{x}_t,x_{t+1})+D_{\omega}(x_{t+1},x_t)-D_{\omega}(\hat{x}_t, x_t)
	\end{equation}
Combing equations~\eq{eq:optimality1} and~\eq{eq:optimality2}, we have
	\begin{equation}\label{eq:decompose}
	\begin{split}
	\ &\mathbb{E}\Big[T_{1/\hat{\rho}}(x_{t+1})\Big]\\
	\leq\ &
	\mathbb{E}\Big[T(\hat{x}_t)+\hat{\rho}\alpha_t\langle G_t, \hat{x}_t-x_{t+1}\rangle+\hat{\rho}\alpha_t\big(r(\hat{x}_t)-r(x_{t+1})\big)+\hat{\rho}D_{\omega}(\hat{x}_t, x_t)-\hat{\rho}D_{\omega}(x_{t+1},x_t)\Big]\\
	=\ &
	\mathbb{E}\Big[T_{1/\hat{\rho}}(x_t)+\hat{\rho}\alpha_t\langle G_t, \hat{x}_t-x_{t+1}\rangle+\hat{\rho}\alpha_t\big(r(\hat{x}_t)-r(x_{t+1})\big)-\hat{\rho}D_{\omega}(x_{t+1},x_t)\Big]\\
	=\ &
	\mathbb{E}\Big[T_{1/\hat{\rho}}(x_t)\Big]+ \hat{\rho}\alpha_t\mathbb{E}\Big[\langle G_t, \hat{x}_t-x_t\rangle+\big(r(\hat{x}_t)-r(x_t)\big)\Big]+\hat{\rho}\mathbb{E}\Big[\alpha_t\big(r(x_t)-r(x_{t+1})\big)\Big]\\
	&\qquad\qquad\quad +\hat{\rho}\mathbb{E}\Big[\alpha_t\langle G_t, x_t-x_{t+1}\rangle-D_{\omega}(x_{t+1},x_t)\Big]
	\end{split}
	\end{equation}
	where the first equality comes from the definition of $T_{1/\hat{\rho}}(x_t)$. 

	Next, invoking the $(\rho,\omega(\cdot))$-relatively weakly convexity of the function $f(x)$, we have 
	\begin{equation}
	\EE[\langle G_t, \hat{x}_t-x_t\rangle]
	\leq f(\hat{x}_t)-f(x_t)+\rho D_\omega(\hat{x}_t,x_t)
	\end{equation}
	where the expectation is taking over $\xi_t|\xi_0,\ldots,\xi_{t-1}$. Combine with $ r(x) $, this implies that the second term in equation \eq{eq:decompose} can be bounded by 
	\begin{equation}\label{eq:second_term}
		\EE\Big[\langle G_t, \hat{x}_t-x_t\rangle+r(\hat{x}_t)-r(x_t)\Big]\leq T(\hat{x}_t)-T(x_t)+\rho D_\omega(\hat{x}_t,x_t)
	\end{equation} 
	Moreover, it is easy to see that the last term in equation \eq{eq:decompose} can also be bounded as follows, 
	\begin{align}\label{eq:last_term}
	\hat{\rho}\mathbb{E}\Big[\alpha_t\langle G_t, x_t-x_{t+1}\rangle-D_{\omega}(x_{t+1},x_t)\Big] 
	\leq &\hat{\rho}\mathbb{E}\Big[\alpha_t\langle G_t, x_t-x_{t+1}\rangle-\frac{1}{2}||x_{t+1}-x_t||^2\Big]\nonumber\\
	 \leq &\frac{1}{2}\hat{\rho}\alpha_t^2\cdot\EE[||G_t||_*^2] \leq\ \frac{1}{2}\hat{\rho}\alpha_t^2L^2
	\end{align}
	Here the first inequality is due to the fact that $D_\omega(x,y)\geq\frac{1}{2}\|x-y\|^2$ and the second inequality is due to Young's inequality. Hence, combining \eq{eq:decompose} with \eq{eq:second_term} and \eq{eq:last_term}, we end up with 
	\begin{equation}
	\begin{split}
		&\mathbb{E}\Big[T_{1/\hat{\rho}}(x_{t+1})\Big]\\
		\leq\ &
		\mathbb{E}\Big[T_{1/\hat{\rho}}(x_t)+\hat{\rho}\alpha_t\big(T(\hat{x}_t)-T(x_t)+\rho D_\omega(\hat{x}_t,x_t)\big)+\hat{\rho}\alpha_t\big(r(x_t)-r(x_{t+1})\big)+\frac{\hat{\rho}\alpha_t^2L^2}{2}\Big]
	\end{split}
	\end{equation}
	Therefore, by telescoping the sum from $t=0$ to $N-1$, and moving terms around, we further arrive at 
    \begin{eqnarray}
	&&\sum_{t=0}^{N-1}\mathbb{E}\Big[\alpha_t\big(T(x_t)-T(\hat{x}_t)-\rho D_\omega(\hat{x}_t,x_t)\big)\Big]\\
	&\leq&\frac{1}{\hat{\rho}}\big(T_{1/\hat{\rho}}(x_0)-T_{1/\hat{\rho}}(x_N)\big)+\sum_{t=0}^{N-1}\alpha_t\big(r(x_t)-r(x_{t+1})\big)+\frac{L^2}{2}\sum_{t=0}^{N-1}\alpha_t^2\\
	&\leq &\frac{1}{\hat{\rho}}\big(T_{1/\hat{\rho}}(x_0)-T_{\min}\big)+\alpha_0r(x_0)+\frac{L^2}{2}\sum_{t=0}^{N-1}\alpha_t^2\label{eq:total}
	\end{eqnarray}
	The last inequality is because that the stepsize $\alpha_t$ is non-increasing and the function $r(x)$ is nonnegative, which leads to 
	\begin{equation*}
		\begin{split}
		\sum_{t=0}^{N-1}\alpha_t\big(r(x_t)-r(x_{t+1})\big)
		=\ &
		\alpha_0r(x_0)-\alpha_{N-1}r(x_N)+\sum_{t=0}^{N-2}(\alpha_{t+1}-\alpha_t)r(x_{t+1})
		\leq
		\alpha_0r(x_0).
		\end{split}
	\end{equation*}

	Finally, let us divide both sides of equation \eq{eq:total} by $ \sum_{t=0}^{N-1}\alpha_t $, we finally obtain
	\begin{equation}\label{eq:mainresult}
	\frac{\sum_{t=0}^{N-1}\mathbb{E}\Big[\alpha_t\big(T(x_t)-T(\hat{x}_t)-\rho D_\omega(\hat{x}_t,x_t)\big)\Big]}{\sum_{t=0}^{N-1}\alpha_t}
	\leq
	\frac{T_{1/\hat{\rho}}(x_0)-T_{\min}+\hat{\rho}\alpha_t r(x_0)+\rho D_\omega(\hat{x}_t,x_t)}{\hat{\rho}\sum_{t=0}^{N-1}\alpha_t}.
	\end{equation}
	Invoking the definition of $ x_R $ in the algorithm, this implies that 
	\begin{equation}
	\text{LHS}=\mathbb{E}\Big[T(x_R)-T(\hat{x}_R)-\rho D_\omega(\hat{x}_R,x_R)\Big].
	\end{equation}
	Recall that $\hat{x}_R$ is the minimizer of the problem, $ \argmin_{x\in X}\;\{T(x)+\hat{\rho}D_{\omega}(x,x_R)\}$, and the objective is $(\hat\rho-\rho)$-relatively strongly convex. It follows from Lemma~\ref{lem:threepoint} that
	 \begin{equation}\label{eq:sc}
	T(x_R)-[T(\hat{x}_R)+\hat{\rho}D_\omega(\hat{x}_R,x_R)]\geq (\hat{\rho}-\rho)D_\omega(x_R,\hat{x}_R)
	\end{equation}
	Hence, we can further derive that 
	\begin{equation}
	\begin{split}
	\text{LHS}=\ &\mathbb{E}\Big[T(x_R)-T(\hat{x}_R)-\rho D_{\omega}(\hat{x}_R,x_R)\Big]\\
	=\ &\mathbb{E}\Big[T(x_R)-\big[T(\hat{x}_R)+\hat{\rho}D_{\omega}(\hat{x}_R,x_R)\big]+(\hat{\rho}-\rho) D_{\omega}(\hat{x}_R,x_R)\Big]\\
	\geq\ 
	&\mathbb{E}\Big[(\hat{\rho}-\rho)D(x_R,\hat{x}_R)+(\hat{\rho}-\rho) D_{\omega}(\hat{x}_R,x_R)\Big]\\
	=\
	&\frac{(\hat{\rho}-\rho)^2}{\hat\rho^2}\mathbb{E}\Big[\Delta_{1/\hat{\rho}}(x_R)\big].
	\end{split}
	\end{equation}
	Here second inequality is from \eq{eq:sc} and the last inequality is simply using the definition of $\Delta_{1/\hat{\rho}}(x_R)$. Combining with equation \eq{eq:mainresult}, we arrive at the desired result as stated in the theorem.
	\end{proof}

\section{Extension to Relatively Continuous Nonconvex Problems}
\label{sec:extensions}
In this section, we further extend the previous stationary convergence results of SMD under relaxed assumptions of the Lipschitz continuity of the function $f(x)$. A standard condition for applying the SMD algorithm to stochastic nonsmooth problems is to assume that the stochastic gradient has bounded moments, \ie, $\max_{x\in X}\;\EE[\|G(x,\xi)\|_*^2]\leq L^2$. Recent works (e.g., \cite{lu2017relative}) show that such an assumption is not always satisfied in practice, particularly for those objectives without Lipschitz continuity. Here we generalize the convergence results to a broader class of nonsmooth nonconvex functions that are possibly non-Lipschitz continuous. 

 Let $\omega(x):X\to\RR$ be a reference function that is differentiable and 1-strongly convex on ${X}$ with respect to the given norm $\|\cdot\|$ and $D_\omega(x,y)$ be the Bregman divergence induced by $\omega(x)$.  
\begin{defn}
	\emph{(Stochastically (Fr\'echet) Relatively Continuous Functions)}  

	
	A function $ f(x) $ is called \emph{$ L $-Stochastically relatively continuous} with respect to $ \omega(x) $ on a set $ {X} $, denoted as $ (L,\omega(\cdot))$-SRC,  for some positive constant $L>0$,  if for any $x\in X$ and any unbiased estimator $G(x,\xi)$ of the subgradient of $ f(\cdot) $ at $ x $, satisfy $\EE[G(x,\xi)]\in\partial f(x)$, and  
		\begin{equation}
	\mathbb{E}\big[||G(x,\xi)||^2_*\big]\leq\frac{L^2D_{\omega}(y,x)}{\frac{1}{2}||y-x||^2}, \forall y\neq x.
	\end{equation}

\end{defn}

\begin{lm}\label{lem:SRC_property}
	\emph{(Binomial Property of SRC Functions, \cite{lu2017relative})} Let $f(x)$ be a $(L,\omega(\cdot))$-SRC function and $ x\in{X}$. Define the random vector
	\begin{equation}
	M(x,\xi)\coloneqq||G(x,\xi)||_*\cdot\max_{y\in{X},y\neq x}\frac{||y-x||}{\sqrt{2D_{\omega}(y,x)}}.
	\end{equation}
	Then it holds that 
	\begin{enumerate}
	\item[(a)] $ \mathbb{E}\big[M^2(x,\xi)\big]\leq L^2 $, and
	\item[(b)] For any $ \alpha>0 $,  $\langle \alpha G(x,\xi),x-y\rangle-D_{\omega}(y,x)\leq \frac{1}{2}\alpha^2 M^2(x,\xi)$.
\end{enumerate}
\end{lm}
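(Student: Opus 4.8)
The plan is to treat the deterministic scaling factor in $M(x,\xi)$ separately from the stochastic part. Fix $x\in X$ and write $M(x,\xi)=C\cdot\|G(x,\xi)\|_*$, where $C:=\max_{y\in X,\,y\neq x}\frac{\|y-x\|}{\sqrt{2D_\omega(y,x)}}$ is a constant independent of $\xi$. Since $\omega$ is $1$-strongly convex we have $D_\omega(y,x)\geq\frac12\|y-x\|^2$, so each ratio in the maximum is at most $1$ and hence $C\leq 1$; in particular $C$ is finite and $M$ is well defined. This decomposition is the key device, because it lets me pull the deterministic factor $C$ in and out of expectations freely.

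For part (a), I would start from the SRC inequality and rearrange it so that the stochastic term is isolated: for every $y\neq x$,
$$\mathbb{E}\big[\|G(x,\xi)\|_*^2\big]\cdot\frac{\tfrac12\|y-x\|^2}{D_\omega(y,x)}\leq L^2.$$
Taking the supremum over $y\neq x$ on the left and noting that $\sup_{y\neq x}\frac{\tfrac12\|y-x\|^2}{D_\omega(y,x)}=C^2$, I obtain $C^2\,\mathbb{E}[\|G(x,\xi)\|_*^2]\leq L^2$. Since $C$ is deterministic, the left-hand side is exactly $\mathbb{E}[M^2(x,\xi)]=C^2\,\mathbb{E}[\|G(x,\xi)\|_*^2]$, which yields claim (a).

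For part (b), I would bound the inner product by the dual-norm (Cauchy--Schwarz) inequality, $\langle\alpha G(x,\xi),x-y\rangle\leq\alpha\|G(x,\xi)\|_*\,\|y-x\|$, then control $\|y-x\|$ using the very definition of $C$, namely $\|y-x\|\leq C\sqrt{2D_\omega(y,x)}$. This gives
$$\langle\alpha G(x,\xi),x-y\rangle\leq\alpha\,M(x,\xi)\,\sqrt{2D_\omega(y,x)}.$$
A final application of Young's inequality $ab\leq\frac12a^2+\frac12b^2$ with $a=\alpha M(x,\xi)$ and $b=\sqrt{2D_\omega(y,x)}$ splits the right-hand side into $\frac12\alpha^2 M^2(x,\xi)+D_\omega(y,x)$, and moving $D_\omega(y,x)$ to the left gives exactly claim (b).

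The argument is essentially algebraic; the one point requiring care is the coordination between the ``$\forall y$'' quantifier in the SRC definition and the ``$\max_y$'' defining $M$. For (a) these align only after taking the supremum over $y$ of the rearranged inequality, which is precisely what converts the pointwise SRC bound into the stated bound on $\mathbb{E}[M^2]$; for (b) the same constant $C$ must simultaneously upper-bound $\|y-x\|/\sqrt{2D_\omega(y,x)}$ for the specific $y$ appearing in the inner product. Both become immediate once $M$ is decomposed as $C\|G\|_*$, so I do not anticipate any serious difficulty beyond keeping this quantifier bookkeeping straight.
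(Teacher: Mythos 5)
Your proof is correct, and it supplies the details that the paper itself omits (the lemma is stated without proof and attributed to \cite{lu2017relative}); the decomposition $M(x,\xi)=C\,\|G(x,\xi)\|_*$ with $C=\max_{y\neq x}\|y-x\|/\sqrt{2D_\omega(y,x)}\leq 1$, the rearrangement of the SRC bound for part (a), and the H\"older--Young chain for part (b) are exactly the standard argument underlying the cited result. No gaps.
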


\begin{thm}\label{SMDcase2Thm}
Suppose that $f(x)$ is $(\rho,\omega(\cdot))$-RWC and $(L,\omega(\cdot))$-SRC as defined. Let $ x_R $ be the output of SMD algorithm for solving the problem \eq{eq:obj} within a fixed iteration number $ N>0 $, and constant stepsize $ \alpha_t=c/\sqrt{N}$, where $c>0$.  We have 
\begin{equation}\label{eq:final_result}
	\mathbb{E}[\Delta_{1/(2\rho)}(x_R)]\leq2\cdot\Big(\frac{T_{1/(2\rho)}(x_0)-T_{\min}+\rho c^2L^2}{c\sqrt{N}}+\frac{r(x_0)}{N}\Big).
	\end{equation}
\end{thm}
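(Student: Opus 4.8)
The plan is to observe that the target bound \eq{eq:final_result} is \emph{identical} in form to the bound \eq{SMDCase4Result} of Corollary~\ref{cor:rate}, which itself follows from Theorem~\ref{thm:main} upon setting $\hat\rho=2\rho$ and $\alpha_t\equiv c/\sqrt{N}$. Hence the entire task reduces to re-deriving the master inequality \eq{SMDcase3Result} under the weaker $(L,\omega(\cdot))$-SRC hypothesis in place of the uniform moment bound, and then specializing the constants exactly as before. Scanning the proof of Theorem~\ref{thm:main}, the assumption $\EE[\|G(x,\xi)\|_*^2]\leq L^2$ enters in precisely one place: the estimate of the ``last term'' in \eq{eq:last_term}. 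Every other ingredient---the optimality relations \eq{eq:optimality1}--\eq{eq:optimality2}, the relative-weak-convexity bound \eq{eq:second_term}, the telescoping, and the final strong-convexity lower bound \eq{eq:sc}---is independent of any moment assumption and therefore transfers verbatim.

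So the only genuine work is to re-prove \eq{eq:last_term} using SRC. In the Lipschitz proof this step \emph{discards} most of the divergence through $D_\omega(x_{t+1},x_t)\geq\frac12\|x_{t+1}-x_t\|^2$, applies Young's inequality, and closes with the uniform bound on $\|G_t\|_*^2$. Under SRC the quantity $\|G_t\|_*^2$ may be unbounded, so this route is unavailable; instead I would invoke the binomial property of Lemma~\ref{lem:SRC_property}(b) against the \emph{full} divergence. Taking $x=x_t$, $y=x_{t+1}$, $\alpha=\alpha_t$ there yields, pointwise in $\xi_t$,
\begin{equation*}
\langle\alpha_t G_t,\,x_t-x_{t+1}\rangle-D_\omega(x_{t+1},x_t)\leq\tfrac12\alpha_t^2\,M^2(x_t,\xi_t).
\end{equation*}
Multiplying by $\hat\rho$, taking the conditional expectation over $\xi_t$, and using $\EE[M^2(x_t,\xi_t)]\leq L^2$ from Lemma~\ref{lem:SRC_property}(a) recovers
\begin{equation*}
\hat\rho\,\EE\big[\alpha_t\langle G_t,x_t-x_{t+1}\rangle-D_\omega(x_{t+1},x_t)\big]\leq\tfrac12\hat\rho\alpha_t^2L^2,
\end{equation*}
which is exactly \eq{eq:last_term}.

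With this single substitution in hand, the rest of the proof of Theorem~\ref{thm:main} proceeds without any change and delivers the master inequality \eq{SMDcase3Result}. Specializing $\hat\rho=2\rho$ and the constant stepsize $\alpha_t\equiv c/\sqrt{N}$---so that $\sum_t\alpha_t=c\sqrt{N}$, $\sum_t\alpha_t^2=c^2$, and $\hat\rho/(\hat\rho-\rho)=2$---then reproduces \eq{eq:final_result}, precisely as in the derivation of Corollary~\ref{cor:rate}. The one conceptual obstacle is recognizing that the \emph{whole} Bregman term $D_\omega(x_{t+1},x_t)$ must be retained and paired against $M^2(x_t,\xi_t)$ rather than thrown away; this is exactly the role Lemma~\ref{lem:SRC_property} was designed to play, so once the correspondence is drawn the extension is purely mechanical and requires no smoothness, Lipschitz continuity, or mini-batching.
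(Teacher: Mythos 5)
Your proposal is correct and follows essentially the same route as the paper: the paper's own proof of Theorem~\ref{SMDcase2Thm} likewise observes that the bounded-moment assumption is used only in establishing \eq{eq:last_term} and replaces that single step by a direct application of Lemma~\ref{lem:SRC_property}, leaving the rest of the argument for Theorem~\ref{thm:main} and the specialization $\hat\rho=2\rho$, $\alpha_t\equiv c/\sqrt{N}$ unchanged. Your write-up simply makes explicit the choice $x=x_t$, $y=x_{t+1}$, $\alpha=\alpha_t$ in Lemma~\ref{lem:SRC_property}(b) and the subsequent conditional expectation, which the paper leaves implicit.
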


\begin{proof}
The theorem can be proved with a slight modification of the proof as detailed in the previous section. When the function $T(x)$ is $(L,\omega(\cdot))$-SRC, we can still prove the equation~\eq{eq:last_term} by directly apply Lemma~\ref{lem:SRC_property}.
\end{proof}

\begin{remark}
Note that the reference function $ \omega(\cdot) $ used to define either the relatively weak convexity or the stochastically relative continuity is the same as the distance generating function used in the SMD algorithm as well as in the Bregman Moreau envelope. 
\end{remark}

\begin{remark} \emph{(Deterministic Setting)}
The results developed in Sections~\ref{sec:main} and ~\ref{sec:extensions} also apply to  deterministic nonconvex problems and the deterministic Mirror Descent algorithm. Particularly, suppose we have access to a subgradient oracle that returns $g(x)\in\partial f(x)$ for any input $x$, and $||g(x)||^2_*\leq\frac{L^2D_{\omega}(y,x)}{\frac{1}{2}||y-x||^2}, \forall y\neq x.$ Suppose the Mirror Descent algorithm performs the updates:
$$ x_{t+1}=\argmin_{x\in X}\left\{\langle g(x_t), x \rangle+ r(x)+\frac{1}{\alpha_t}D_{\omega}(x,x_t)\right\}$$ for $t=0,1,\ldots, N-1$, and outputs a solution $x_R$ such that $x_R=\argmin_{t=0,\ldots,N-1}\{\Delta_{1/(2\rho)}(x_t)\}$. Then with constant stepsize $ \alpha_t=c/\sqrt{N}$, where $c>0$, we have 
\begin{equation}\label{eq:final_result}
	\Delta_{1/(2\rho)}(x_R)\leq2\cdot\Big(\frac{T_{1/(2\rho)}(x_0)-T_{\min}+\rho c^2L^2}{c\sqrt{N}}+\frac{r(x_0)}{N}\Big).
	\end{equation}
\end{remark}
In other words, the number of subgradient evaluations needed to obtain an $\epsilon$-stationary solution such that $\Delta_{1/(2\rho)}(x)\leq\eps$, is at most $\Ocal(1/\eps^2)$. This result seems to be also the first non-asymptotic convergence result for the deterministic Mirror Descent algorithm and its proximal variant. 

\section{Conclusion}
In this paper, we establish the first non-asymptotic convergence analysis of Stochastic Mirror Descent (SMD) for solving a general class of nonconvex nonsmooth optimization problems, under relaxed conditions of weak convexity and continuity.  Our analysis applies to many variants in the family of SMD algorithms, and indicates that using mini-batch is not necessary for stationary convergence of SMD. We also show that using non-Euclidean setup could yield stronger stationarity guarantees.  For future work, we will investigate the convergence behaviors of other algorithms in the SMD family under different settings, both in theory and in real applications. 

\small
\singlespacing
\noindent

\bibliographystyle{plainnat}

\begin{thebibliography}{19}
 		\providecommand{\natexlab}[1]{#1}
 		\providecommand{\url}[1]{\texttt{#1}}
 		\expandafter\ifx\csname urlstyle\endcsname\relax
 		\providecommand{\doi}[1]{doi: #1}\else
 		\providecommand{\doi}{doi: \begingroup \urlstyle{rm}\Url}\fi
 		
 		\bibitem[Bauschke et~al.(2006)Bauschke, Combettes, and Noll]{bauschke2006joint}
 		Heinz~H. Bauschke, Patrick~L. Combettes, and Dominikus Noll.
 		\newblock Joint minimization with alternating bregman proximity operators.
 		\newblock \emph{Pacific Journal of Optimization}, 2\penalty0 (3):\penalty0
 		401--424, 2006.
 		
 		\bibitem[Beck(2017)]{beck17firstorder}
 		Amir Beck.
 		\newblock \emph{First-Order Methods in Optimization}.
 		\newblock Society for Industrial and Applied Mathematics, Philadelphia, PA,
 		2017.
 		
 		\bibitem[Chen and Gu(2014)]{laming2014convergence}
 		Laming Chen and Yuantao Gu.
 		\newblock The convergence guarantees of a non-convex approach for sparse
 		recovery.
 		\newblock \emph{IEEE Transactions on Signal Processing}, 62\penalty0
 		(15):\penalty0 3754--3767, 2014.
 		
 		\bibitem[Davis and Drusvyatskiy(2018)]{davis2018stochastic}
 		Damek Davis and Dmitriy Drusvyatskiy.
 		\newblock Stochastic subgradient method converges at the rate $ o (k^{-1/4})$
 		on weakly convex functions.
 		\newblock \emph{arXiv preprint arXiv:1802.02988}, 2018.
 		
 		\bibitem[Davis and Grimmer(2017)]{davis2017proximally}
 		Damek Davis and Benjamin Grimmer.
 		\newblock Proximally guided stochastic subgradient method for nonsmooth,
 		nonconvex problems.
 		\newblock \emph{arXiv preprint arXiv:1707.03505}, 2017.
 		
 		\bibitem[Davis et~al.(2017)Davis, Drusvyatskiy, and
 		Paquette]{davis2017nonsmooth}
 		Damek Davis, Dmitriy Drusvyatskiy, and Courtney Paquette.
 		\newblock The nonsmooth landscape of phase retrieval.
 		\newblock \emph{arXiv preprint arXiv:1711.03247}, 2017.
 		
 		\bibitem[Drusvyatskiy(2017)]{drusvyatskiy2017proximal}
 		Dmitriy Drusvyatskiy.
 		\newblock The proximal point method revisited.
 		\newblock \emph{arXiv preprint arXiv:1712.06038}, 2017.
 		
 		\bibitem[Duchi et~al.(2010)Duchi, Shalev-Shwartz, Singer, and
 		Tewari]{duchi2010composite}
 		John Duchi, Shai Shalev-Shwartz, Yoram Singer, and Ambuj Tewari.
 		\newblock Composite objective mirror descent.
 		\newblock In \emph{Proc. of the 23th Annual Conference on Learning Theory},
 		pages 14--26, 2010.
 		
 		\bibitem[Ghadimi and Lan(2013)]{ghadimi2013stochastic}
 		Saeed Ghadimi and Guanghui Lan.
 		\newblock Stochastic first-and zeroth-order methods for nonconvex stochastic
 		programming.
 		\newblock \emph{SIAM Journal on Optimization}, 23\penalty0 (4):\penalty0
 		2341--2368, 2013.
 		
 		\bibitem[Ghadimi et~al.(2016)Ghadimi, Lan, and Zhang]{ghadimi2016mini}
 		Saeed Ghadimi, Guanghui Lan, and Hongchao Zhang.
 		\newblock Mini-batch stochastic approximation methods for nonconvex stochastic
 		composite optimization.
 		\newblock \emph{Mathematical Programming}, 155\penalty0 (1-2):\penalty0
 		267--305, 2016.
 		
 		\bibitem[He(2015)]{he2015saddle}
 		Niao He.
 		\newblock \emph{Saddle Point Techniques in Convex Composite and
 			Error-in-Measurement Optimization}.
 		\newblock Ph.D Thesis. Georgia Institute of Technology, 2015.
 		
 		\bibitem[Liu et~al.(2009)Liu, Chen, and Ye]{jun2009largescale}
 		Jun Liu, Jianhui Chen, and Jieping Ye.
 		\newblock Large-scale sparse logistic regression.
 		\newblock In \emph{Proceedings of the 15th ACM SIGKDD International Conference
 			on Knowledge Discovery and Data Mining}, pages 547--556, 2009.
 		
 		\bibitem[Lu(2017)]{lu2017relative}
 		Haihao Lu.
 		\newblock ``{R}elative-continuity" for non-lipschitz non-smooth convex
 		optimization using stochastic (or deterministic) mirror descent.
 		\newblock \emph{arXiv preprint arXiv:1710.04718}, 2017.
 		
 		\bibitem[Lu et~al.(2018)Lu, Freund, and Nesterov]{lu2018relatively}
 		Haihao Lu, Robert~M Freund, and Yurii Nesterov.
 		\newblock Relatively smooth convex optimization by first-order methods, and
 		applications.
 		\newblock \emph{SIAM Journal on Optimization}, 28\penalty0 (1):\penalty0
 		333--354, 2018.
 		
 		\bibitem[Nemirovski and Yudin(1983)]{blair1985problem}
 		Arkadii.~S. Nemirovski and David.~B. Yudin.
 		\newblock \emph{{Problem Complexity and Method Efficiency in Optimization}}.
 		\newblock Wiley, New York, 1983.
 		
 		\bibitem[Nemirovski et~al.(2009)Nemirovski, Juditsky, Lan, and Shapiro]{Nem09}
 		Arkadi Nemirovski, Anatoli Juditsky, Guanghui Lan, and Alexander Shapiro.
 		\newblock Robust stochastic approximation approach to stochastic programming.
 		\newblock \emph{SIAM Journal on Optimization}, 19\penalty0 (4):\penalty0
 		1574--1609, 2009.
 		
 		\bibitem[Shen and Gu(2018)]{xinyue2018nonconvex}
 		Xinyue Shen and Yuantao Gu.
 		\newblock Nonconvex sparse logistic regression with weakly convex
 		regularization.
 		\newblock \emph{IEEE Transactions on Signal Processing}, 66\penalty0
 		(12):\penalty0 3199--3211, 2018.
 		
 		\bibitem[Vial(1983)]{vial1983strong}
 		Jean-Philippe Vial.
 		\newblock Strong and weak convexity of sets and functions.
 		\newblock \emph{Mathematics of Operations Research}, 8\penalty0 (2):\penalty0
 		231--259, 1983.
 		
 		\bibitem[Zhou et~al.(2017)Zhou, Mertikopoulos, Bambos, Boyd, and
 		Glynn]{zhou2017stochastic}
 		Zhengyuan Zhou, Panayotis Mertikopoulos, Nicholas Bambos, Stephen Boyd, and
 		Peter~W Glynn.
 		\newblock Stochastic mirror descent in variationally coherent optimization
 		problems.
 		\newblock In \emph{Advances in Neural Information Processing Systems}, pages
 		7043--7052, 2017.
 		
 	\end{thebibliography}

\end{document}